\def\e{{\epsilon}}
\def\0{{\bf 0}}
\def\1{{\bf 1}}
\def\e{{\bf e}}
\def\R{\mathbb{R}}
\def\ao#1{{#1}}
\def\aor#1{{#1}}
\newtheorem{remark}[theorem]{\indent {Remark}}
\title{Cooperative learning in multi-agent systems from intermittent measurements\thanks{This work was supported in part by AFOSR grant FA9550-07-1-0-0528, ONR grant N00014-09-1-1074 and ARO grant W911NG-11-1-0385.}}
\author{Naomi Ehrich Leonard\thanks{Department of Mechanical and Aerospace Engineering, Princeton University, Princeton, NJ, 08544, USA ({\tt naomi@princeton.edu}).}
        \and Alex Olshevsky\thanks{Department of Industrial and Enterprise Systems Engineering, University of Illinois at Urbana-Champaign, Urbana, IL, 61801, USA ({\tt aolshev2@illinois.edu}).  }}
\begin{document}

\maketitle

\begin{abstract} Motivated by the problem of tracking a direction in a decentralized way, we consider the general problem of cooperative learning in multi-agent systems with time-varying connectivity and intermittent measurements. We propose a distributed learning protocol capable of learning an unknown vector $\mu$ from noisy measurements made independently by autonomous nodes. Our protocol is completely distributed and  able to cope with the time-varying, unpredictable, \aor{and noisy} nature of inter-agent communication, and intermittent noisy measurements of $\mu$. Our main result bounds the learning speed of our protocol in terms of the size and combinatorial features of the (time-varying) networks connecting the nodes. 
\end{abstract}

\begin{keywords} multi-agent systems, learning theory, distributed control.
\end{keywords}

\begin{AMS} 93E35, 93A14. 
\end{AMS}

\pagestyle{myheadings}
\thispagestyle{plain}

\section{Introduction\label{intro}}

Widespread deployment of mobile sensors is expected to revolutionize
our ability to monitor and control physical environments. However, for these networks 
to reach their full range of applicability they must be capable of
operating in uncertain and unstructured environments. Realizing
the full potential of networked sensor systems will require the development of protocols that are fully
distributed and adaptive in the face of persistent faults and time-varying,
unpredictable environments. 

\ao{Our goal in this paper is to initiate the study of cooperative multi-agent learning by distributed
networks operating in unknown and changing environments, subject to faults and failures of 
communication links. While our focus here is on the basic problem of learning an unknown vector, 
we hope to contribute to the development of a broad theory of cooperative, distributed learning in such environments, with 
the ultimate aim of designing sensor network protocols capable of adaptability. }

\ao{We will study a simple, local protocol for learning a vector from intermittent
measurements and evaluate its performance in terms of the number of nodes and the (time-varying) network structure.} Our
 direct motivation is the problem of tracking a direction from chemical gradients. A network of 
mobile sensors needs to move in a direction $\mu$ (understood as a vector on the unit circle), which none 
of the sensors initially know; however, intermittently some sensors are able to obtain a sample of $\mu$. The sensors
can observe the velocity of neighboring sensors but, as the sensors move, the set of neighbors of each sensor changes.  The challenge
is to design a protocol by means of which the sensors can adapt their velocities based on the measurements 
of $\mu$ and observations of the velocities of neighboring sensors so that every node's velocity converges to $\mu$ as 
fast as possible. This challenge is further complicated by the fact that all estimates of $\mu$ as well as all 
observations of the velocities of neighbors
are assumed to be noisy.

We will consider a natural generalization in the problem, \ao{wherein we abandon the constraint that $\mu$ lies on the
unit circle} and instead consider the problem of learning an arbitrary vector $\mu$ by a network of mobile nodes subject to time-varying (and unpredictable) inter-agent connectivity, and intermittent, noisy measurements. \ao{We will be interested in the speed at which
local, distributed protocols are able to drive every node's estimate of $\mu$ to the correct value. We will be especially concerned with identifying the salient 
features of network topology that result in good (or poor) performance.}   

\subsection{Cooperative multi-agent learning}

We begin by formally stating the problem. We consider $n$ autonomous nodes engaged in the task of learning a vector $\mu \in \mathbb{R}^l$. At each time  $t=0,1,2,\ldots$ we denote by $G(t)=(V(t),E(t))$ the graph of inter-agent communications at time $t$:
two nodes are connected by an edge in $G(t)$ if and only if they are able to exchange messages at time $t$. Note that by definition the graph $G(t)$ is undirected. If $(i,j) \in G(t)$ then we will say that $i$ and $j$ are neighbors at time $t$. We will adopt the convention that $G(t)$ contains no self-loops. We will assume the graphs $G(t)$ satisfy a standard condition of uniform connectivity over a long-enough 
time scale: namely, there exists some 
constant positive integer $B$ (unknown to any of the nodes) such that the graph sequence $G(t)$ is $B$-connected, i.e. the graphs $(\{1,\ldots,n\}, \bigcup_{kB+1}^{(k+1)B} E(t) )$ are connected for each
integer $k \geq 0$. \ao{Intuitively, the uniform connectivity condition means that once we take all the edges that have appeared between times $kB$ and $(k+1)B$, the graph is connected}.

Each node maintains an estimate of $\mu$; we will denote the estimate of  node $i$ at time $t$ by $v_i(t)$. At time $t$, node $i$ can update $v_i(t)$ as a function of the noise-corrupted estimates $v_j(t)$ of its neighbors. We will use $o_{ij}(t)$ to denote the noise-corrupted estimate of the offset $v_j(t)-v_i(t)$ available to neighbor $i$ at time $t$: \[ o_{ij}(t) = v_j(t) - v_i(t) + w_{ij}(t) \] Here $w_{ij}(t)$ is a zero-mean random vector every entry of which has variance $(\sigma')^2$, and all $w_{ij}(t)$ are assumed to be independent of each other, as well as all other
random variables in the problem (which we will define shortly). These updates may be the result of a wireless message exchange or may come about as a result of sensing by each node. Physically, each node is usually able to  sense  (with noise) the relative difference $v_j(t) - v_i(t)$, for example if $v_i(t)$ represent 
velocities and measurements by the agents are made in their frame of reference. Alternatively, it may be that nodes are able to measure the absolute 
quantities $v_j(t), v_i(t)$ and then $w_{ij}(t)$ is the sum of the noises in these two measurements.  

 Occasionally, some nodes have access to a noisy measurement \[ \mu_i(t) = \mu + w_i(t),\] where $w_i(t)$ is a zero-mean random vector every entry of which has variance $\sigma^2$; we assume all vectors $w_i(t)$ are independent of each other and of all $w_{ij}(t)$. In this case, node $i$ incorporates this measurement into its updated estimate $v_i(t+1)$. We will refer to a time $t$ when at least one node has a measurement as a {\em measurement time}. For the rest of the paper, we will be making an assumption of uniform measurement speed, namely that fewer than $T$ steps pass between successive measurement times; more precisely, letting $t_k$ be the times when at least one node makes a measurement, we will assume that $t_1 = 1$ and $|t_{k+1} - t_k|  < T$ for all positive integers $k$. 

It is useful to think of this formalization in terms of our motivating scenario, which is a collection of nodes - vehicles, UAVs, mobile 
sensors, or underwater gliders - which need to learn and follow a direction. Updated information about the direction arrives from time to time as one or more of the nodes takes measurements, and the nodes need a protocol by which they update their velocities $v_i(t)$ based on the measurements and observations of the velocities of neighboring nodes. 

This formalization also describes the scenario in which a moving group of animals  must all learn which way to go based on intermittent samples of a preferred direction and social interactions with near neighbors.  An example is collective migration where high costs associated with obtained measurements of the migration route suggest that the majority of individuals rely on the more accessible observations of the relative motion of their near neighbors when they update their own velocities \cite{gc10}.

\subsection{Our results\label{section:controllaw}\label{section:results}} We now describe the protocol which we analyze for the remainder of
this paper. If at time $t$ node $i$ does not have a measurement of $\mu$, it nudges its velocity in the direction of its neighbors:
\begin{equation} \label{nonmeasuringupdate} v_i(t+1) = v_i(t) + \frac{\Delta(t)}{4} \sum_{j \in N_i(t)} \frac{\aor{o_{ij}(t)}}{\max(d_i(t),d_j(t))},
\end{equation} where $N_i(t)$ is the set of neighbors of node $i$ at time $t$, $d_i(t)$ is the cardinality of $N_i(t)$, and $\Delta(t)$ is a stepsize 
which we will specify later. 

On the other hand, if node $i$ does have a measurement $\mu_i(t)$, it updates as
\begin{equation} \label{measuringupdate} v_i(t+1) = v_i(t) + \frac{\Delta(t)}{4} \left( \mu_i(t) - v_i(t) \right) +  \frac{\Delta(t)}{4} \sum_{j \in N_i(t)} \frac{\aor{o_{ij}(t)}}{\max(d_i(t), d_j(t))}. \end{equation} 

Intuitively, each node seeks to align its estimate $v_i(t)$ with both the measurements it takes and estimates of neighboring nodes. As nodes align with one another, information from each measurement slowly propagates throughout the system. 

Our protocol is motivated by a number of recent advances within the literature on multi-agent consensus. On the one hand, the weights we accord to neighboring nodes are based on Metropolis weights (first introduced within the context of multi-agent control in \cite{bdx04}) and are chosen because they lead to a tractable Lyapunov analysis as in \cite{noot09}. On the other hand, we introduce a stepsize $\Delta(t)$ which we will later choose to decay to zero with $t$ at an appropriate speed by analogy with the recent work on multi-agent optimization 
\cite{no09, sn11, yns12}.

\aor{ The use of a stepsize $\Delta(t)$ is crucial for the system to be able to successfully learn the unknown vector $\mu$ with this scheme. Intuitively, as $t$ gets large,  the nodes should avoid  overreacting by changing their 
estimates in response to every new noisy sample. Rather, the influence of every new sample on the estimates $v_1(t), \ldots, v_n(t)$ should decay with $t$: the
more information the nodes have collected in the past, the less they should be inclined to revise their estimates in response to a new sample. This is accomplished by ensuring that the influence of each successive new sample decays with the stepsize $\Delta(t)$.}

We note that our protocol is also motivated by models used to analyze collective decision making and collective motion in animal groups \cite{c05,lsnscl12}.  Our time varying stepsize rule is similar to models of context-dependent interaction in which individuals reduce their reliance on social cues when they are progressing towards their target \cite{tnc09}.


 We now proceed to set the background for our main result, which bounds the rate at which the estimates $v_i(t)$ converge to $\mu$. We first state a proposition which assures us that the estimates $v_i(t)$ do indeed converge to $\mu$ almost surely. 

\bigskip

\begin{proposition} \label{thm:conv} If the stepsize $\Delta(t)$ is nonnegative, nonincreasing and satisfies 
\[ \sum_{t=1}^{\infty} \Delta(t) = +\infty, ~~~~~~~ \sum_{t=1}^{\infty} \Delta^2(t) < \infty,  ~~~~~~~~~ \sup_{t \geq 1} \frac{\Delta(t)}{\Delta(t+c)} < \infty ~~~~ \mbox{ for any fixed integer } c \] then for any initial values
$v_1(0), \ldots, v_n(0)$, we have that with probability $1$ \[ \lim_{t \rightarrow \infty} v_i(t) = \mu ~~~~ \mbox{ for all } i. \]
\end{proposition}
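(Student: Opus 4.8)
The plan is to recognize this as a \emph{linear stochastic approximation} recursion and analyze it with a window-based Lyapunov argument together with an almost-supermartingale convergence theorem. Since the updates $(\ref{nonmeasuringupdate})$--$(\ref{measuringupdate})$ are affine in the estimates and decouple across the coordinates of $\mu$, I first reduce to the scalar case $l=1$. Collecting the scalar estimates into $v(t)\in\R^n$ and writing $e(t)=v(t)-\mu\1$ for the error, the key structural observation is that the Metropolis weight $\frac{\Delta(t)}{4\max(d_i(t),d_j(t))}$ is symmetric in $i,j$ and sums over neighbors to at most $\Delta(t)/4$; hence the consensus part is a symmetric doubly stochastic matrix $A(t)$ with diagonal entries at least $3/4$ once $\Delta(t)\le 1$ (which holds for all large $t$ since $\Delta(t)\to 0$). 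Letting $D(t)$ be the diagonal $0/1$ matrix marking the measuring nodes, the error obeys
\[ e(t+1) = M(t)\,e(t) + n(t), \qquad M(t) = A(t) - \tfrac{\Delta(t)}{4}D(t), \]
where $n(t)=\frac{\Delta(t)}{4}\big(D(t)w(t)+\xi(t)\big)$ stacks the measurement and aggregated communication noises, is zero-mean, independent across time, and satisfies $\mathbb{E}\big[\|n(t)\|^2\mid\mathcal{F}_t\big]=O(\Delta^2(t))$ for the natural filtration $\mathcal{F}_t$. Crucially, $M(t)=I-\Delta(t)Q(t)$ where $Q(t)=\frac{1}{\Delta(t)}(I-A(t))+\frac14 D(t)$ is a symmetric positive semidefinite matrix (a weighted Laplacian plus a nonnegative diagonal) whose norm is bounded independently of $\Delta(t)$.

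The deterministic heart of the argument is a uniform contraction estimate for the product $\Phi(t+\tau,t)=M(t+\tau-1)\cdots M(t)$ over a window of fixed length $\tau$ (on the order of $B+T$) chosen so that the union of the communication graphs over the window is connected and at least one measurement occurs. Expanding the product gives $\Phi(t+\tau,t)=I-\Delta(t)\tilde Q(t)+O(\Delta^2(t))$ with $\tilde Q(t)=\sum_{s=t}^{t+\tau-1}\frac{\Delta(s)}{\Delta(t)}Q(s)$ symmetric positive semidefinite. I would show $\tilde Q(t)\succeq c\,I$ for a uniform $c>0$: by the comparability hypothesis $\sup_t \Delta(t)/\Delta(t+\tau)<\infty$ the coefficients $\Delta(s)/\Delta(t)$ are bounded above and below, so it suffices that $\sum_s Q(s)$ has trivial kernel. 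But $\ker\big(\sum_s Q(s)\big)=\bigcap_s \ker Q(s)\subseteq\big(\bigcap_s \ker(I-A(s))\big)\cap\big(\bigcap_s \ker D(s)\big)$; the first intersection is the consensus line $\mathrm{span}(\1)$ because the union graph is connected, while the second excludes every vector supported on the measuring nodes, so the intersection is $\{0\}$. As there are only finitely many graph/measurement configurations on $n$ nodes, $c$ is uniform, and for all $t$ large enough that the $O(\Delta^2(t))$ remainder is dominated we obtain $\|\Phi(t+\tau,t)\|_2\le 1-\tfrac{c}{2}\Delta(t)$. I expect this contraction estimate to be the main obstacle, since it must fuse the doubly stochastic (disagreement-damping) structure with the substochastic (mean-damping) effect of measurements, uniformly over the unknown, time-varying topology.

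With the contraction in hand the stochastic part is routine. Subsampling at the window boundaries $t_k=t_0+k\tau$ and writing $e(t_{k+1})=\Phi(t_{k+1},t_k)e(t_k)+N_k$ with $N_k$ the accumulated, $\mathcal{F}_{t_k}$-conditionally zero-mean noise, the cross term vanishes in conditional expectation and $\mathbb{E}\big[\|N_k\|^2\mid\mathcal{F}_{t_k}\big]=O(\Delta^2(t_k))$, so
\[ \mathbb{E}\big[\,\|e(t_{k+1})\|^2\mid\mathcal{F}_{t_k}\big]\le\big(1-c\,\Delta(t_k)\big)\,\|e(t_k)\|^2+C\,\Delta^2(t_k). \]
Since $\sum_t\Delta^2(t)<\infty$ and the stepsizes are comparable within windows, $\sum_k\Delta^2(t_k)<\infty$, so the Robbins--Siegmund almost-supermartingale theorem gives that $\|e(t_k)\|^2$ converges almost surely and $\sum_k\Delta(t_k)\|e(t_k)\|^2<\infty$ a.s. Because $\sum_t\Delta(t)=+\infty$ forces $\sum_k\Delta(t_k)=+\infty$ (again by comparability), this summability compels the almost-sure limit to be $0$, i.e.\ $e(t_k)\to 0$ a.s. Finally I would fill in intermediate times: within each window the propagators $\Phi(t,t_k)$ are nonexpansive and the interpolating noise has summable conditional second moments, so $\max_{t_k\le t<t_{k+1}}\|e(t)\|\to 0$ a.s.\ as well, which yields $v_i(t)\to\mu$ almost surely for every $i$.
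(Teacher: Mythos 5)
Your proposal is correct, and while its outer skeleton matches the paper's proof, the key technical step is carried out by a genuinely different argument. Both proofs reduce to $l=1$, subsample at windows of length on the order of $\max(T,B)$, derive a per-window inequality of the form $E[\,\|e(t_{k+1})\|^2 \mid \mathcal{F}_{t_k}] \le (1-c\,\Delta)\|e(t_k)\|^2 + C\Delta^2$, and finish with an almost-supermartingale lemma (the paper cites Lemma 10 of Chapter 2.2 of \cite{p87}, which plays exactly the role of your Robbins--Siegmund theorem). The difference is how the uniform $c>0$ is produced. The paper applies its per-step Lyapunov decrease (Lemma \ref{lemma:measurementdecrease}, built on Corollary \ref{sievebound}) across the window, passes to the conditional means $z(t)=E[v(t)-\mu\1 \mid v(t_k)]$ via Jensen's inequality $E[X^2]\ge (E[X])^2$, and then proves positivity of an infimum over unit vectors and admissible graph/measurement sequences by a combinatorial ``first crossing'' argument on the sign sets $S_-,S_0,S_+$. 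You instead stay entirely in the matrix picture: writing each propagator as $I-\Delta(t)Q(t)$ with $Q(t)$ symmetric PSD, expanding the window product to first order, and showing $\ker\bigl(\sum_s Q(s)\bigr)=\{0\}$ because union-connectivity collapses the Laplacian kernels to ${\rm span}(\1)$ and a single measurement removes $\1$; uniformity of $c$ then comes from finiteness of configurations, exactly as the paper's compactness step. Your route is more compact and self-contained for the qualitative statement, makes transparent where each stepsize hypothesis enters (the ratio condition lower-bounds $\Delta(s)/\Delta(t)$ within a window), and is more careful than the paper about upgrading convergence along the subsequence $t_k$ to convergence of the full sequence. What the paper's heavier route buys is reuse: Lemma \ref{lemma:measurementdecrease} and the sieve machinery are exactly what drive the quantitative Theorem \ref{mainthm}, whereas your first-order expansion needs $\Delta(t)$ below a configuration-dependent threshold and yields no explicit rate. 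Two small refinements you should make: take the window length at least $2B+T$ (not merely $B+T$), so that every aligned window is guaranteed to contain a complete $B$-connectivity block regardless of offset; and note explicitly that the finitely many pre-asymptotic steps where $\Delta(t)\ge 1$ are harmless because second moments stay finite over any finite horizon.
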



We remark that this proposition may be viewed as a generalization of earlier results on leader-following and learning, which achieved similar conclusions either without the
assumptions of noise, or on fixed graphs, or with the assumption of a fixed leader (see \cite{jlm, leader1, leader2, leader3, angelia-asu-learn, bfh, moura7} as well as the related \cite{hj, carli2}). Our protocol is very much in the spirit of this earlier literature. All the previous protocols (as well as ours) may be thought of as consensus protocols driven by inputs, and we note there are a number of other possible variations on this 
theme which can accomplish the task of learning the unknown vector $\mu$.  

\ao{Our main result in this paper is a strengthened version of \aor{Proposition} \ref{thm:conv} which  provides quantitative bounds on the rate at which convergence to $\mu$ takes place. We are particularly interested in the scaling of the convergence time with the number of nodes and with the combinatorics of the interconnection graphs $G(t)$. We will adopt the natural measure of how far we are from convergence, namely} the sum of the squared distances from the final limit:  \[ Z(\aor{t}) = \sum_{i=1}^n ||v_i(t) - \mu||_2^2. \] We will refer to $Z(t)$ as the variance at time $t$. 

Before we state our main theorem, we introduce some notation. First, we define the the notion of the very lazy Metropolis walk on an undirected graph: this is the random walk which moves from $i$ to $j$ with probability 
$1/(4\max(d(i),d(j)))$ whenever $i$ and $j$ are neighbors. Moreover, given a random walk on a graph, the hitting time from $i$ to $j$ is defined to be the expected time until
the walk visits $j$ starting from $i$. We will use $d_{\rm max}$ to refer to the largest degree of any node in the sequence $G(t)$ and $M$ to refer to the
largest number of nodes that have a measurement at any one time; clearly both $d_{\rm max}$ and $M$ are at most $n$. Finally,  $\lceil x \rceil$ denotes the smallest integer which is at least $x$, and recall that $l$ is the dimension of $\mu$ and all $v_i(t)$. With this notation in place, we now state our main result. 

\bigskip

\begin{theorem} \label{mainthm} Let the stepsize be $\Delta(t)=1/t^{1-\epsilon}$ for some $\epsilon \in (0,1)$. Suppose each of the graphs  $G(t)$ is connected and let $\mathcal{H}$ be the largest hitting time from any node to any node in a very lazy
Metropolis walk on any of the graphs $G(t)$. If $t$ satisfies the lower bound
\begin{equation} \label{transient} t \geq 2T \left[ \frac{288T  \mathcal{H} }{\epsilon} \ln \left( \frac{96T   \mathcal{H} }{\epsilon}  \right) \right]^{1/\epsilon}, \end{equation}
 then we have the following decay bound on the expected variance: 
\begin{equation} \label{eq:connected} 
 E[ Z(t) ~|~ v(1) ]  \leq   39 {\cal H} T l \frac{M \sigma^2 + n T(\sigma')^2}{(t/T-1)^{1 -  \epsilon}} \ln t + Z(1) e^{-\frac{(t/T-1)^\epsilon - 2}{24 \mathcal{H} T\epsilon}}.  \end{equation}

In the general case when each $G(t)$ is not necessarily connected but the sequence $G(t)$ is $B$-connected, we have that if $t$ satisfies the lower bound
\[ t \geq 2 \max(T,2B) \left[ \frac{384 n^2 d_{\rm max} \left( 1 + \max(T,2B) \right) }{\epsilon} \ln \left( \frac{128 n^2 d_{\rm max} \left( 1 + \max(T,2B) \right) }{\epsilon} \right) \right]^{1/\epsilon} \] then 
we have the following decay bound on the expected variance: \begin{equation} \label{eq:general} E[ Z(t) ~|~ v(1) ] \leq 51 n^{2} d_{\rm max} ( 1 +  \max(T,2B))^2 l \frac{ M \sigma^2 + n  (\sigma')^2}{\left( t/\max(t,2B) \right)^{1 - \epsilon}} \ln t + Z(1) e^{- \frac{\left(t/\max(T,2B) \right)^\epsilon - 2}{32n^2 d_{\rm max}  \left( 1 + \max(T,2B) \right) \epsilon}}. \end{equation}
\end{theorem}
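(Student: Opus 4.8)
The plan is to recast the protocol as a linear stochastic recursion on the error and then reduce everything to a single one-window contraction of the expected variance that is iterated against the decaying stepsize. Writing $e_i(t) = v_i(t)-\mu$ and stacking the rows into an $n\times l$ matrix $e(t)$, equations \eqref{nonmeasuringupdate}--\eqref{measuringupdate} become
\[ e(t+1) = \Bigl( I + \Delta(t) L(t) - \tfrac{\Delta(t)}{4} S(t) \Bigr) e(t) + \Delta(t)\,\xi(t), \]
where $L(t)=P(t)-I$, $P(t)$ is the symmetric doubly stochastic lazy Metropolis transition matrix of $G(t)$, $S(t)$ is the diagonal $0/1$ matrix selecting the measuring nodes, and $\xi(t)$ collects the zero-mean independent offset and measurement noises. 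Since the graph sequence and measurement times are deterministic, $H(t)=I+\Delta(t)L(t)-\tfrac{\Delta(t)}{4}S(t)$ is deterministic, so conditioning on the past $\mathcal{F}_t$ kills the cross term and gives $E[Z(t+1)\mid\mathcal{F}_t]=\|H(t)e(t)\|_F^2+\Delta(t)^2\,E\|\xi(t)\|_F^2$; a direct variance computation bounds the injected noise by a constant times $\Delta(t)^2\, l\,(M\sigma^2+n(\sigma')^2)$.

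The heart of the argument is to show that over a window of length $\tau\asymp\mathcal{H}T$ the product of the $H(\cdot)$'s contracts $Z$ multiplicatively, producing a recursion
\[ E[Z(t+\tau)\mid\mathcal{F}_t]\le \Bigl(1-\tfrac{c_1\Delta(t)}{\mathcal{H}T}\Bigr) Z(t) + c_2\,\Delta(t)^2\, l\,\bigl(M\sigma^2+nT(\sigma')^2\bigr). \]
To justify this I decompose $e(t)$ into its average $\bar e(t)=\tfrac1n\1\1^{\top}e(t)$ and the disagreement $e(t)-\bar e(t)$, and treat the two modes separately. The consensus part $I+\Delta(t)L(t)$ fixes the average exactly, since $L(t)\1=\0$, but contracts the disagreement; because the factor of $4$ makes the walk strongly lazy, $P(t)$ has nonnegative spectrum and the contraction factor is exactly $\lambda_2(P(t))<1$, which I would lower bound through the spectral gap. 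Here I invoke a bound of the form $1-\lambda_2(P(t))\ge 1/(c\,\mathcal{H})$, relating the spectral gap of a reversible chain to its maximal hitting time, so that $\Theta(\mathcal{H})$ consensus steps shrink the disagreement by a constant factor. The measurement term $-\tfrac{\Delta(t)}{4}S(t)$ is the only thing contracting the average mode, and by the uniform measurement assumption at least one measurement occurs in every block of $T$ steps. The crux is that the two effects are coupled: a measurement at node $j$ reduces the common error only in proportion to $\langle\bar e,e_j\rangle$, which is effective only after consensus has aligned $e_j$ with $\bar e$. I therefore expect to build a (possibly weighted) quadratic Lyapunov function combining the two energies and to show that over a window long enough for information to traverse the graph — of length $\Theta(\mathcal{H}T)$ — both energies have net decreased.

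With the one-window inequality in hand, the remainder is bookkeeping with $\Delta(t)=(t+1)^{-(1-\epsilon)}$. Using the slow-variation hypothesis $\sup_t\Delta(t)/\Delta(t+c)<\infty$ to treat $\Delta$ as essentially constant across each window, I iterate the recursion. The homogeneous part gives $\prod_k\bigl(1-\tfrac{c_1\Delta(t_k)}{\mathcal{H}T}\bigr)\approx\exp\bigl(-\tfrac{c_1}{\mathcal{H}T}\sum\Delta\bigr)$, and since $\sum_{s\le t}s^{-(1-\epsilon)}\asymp t^{\epsilon}/\epsilon$ this yields the $Z(1)\,e^{-\Theta((t/T)^{\epsilon}/(\mathcal{H}T\epsilon))}$ term; balancing the per-window noise injection $\asymp\Delta(t)^2$ against the contraction $\asymp\Delta(t)/(\mathcal{H}T)$ leaves a floor of order $\mathcal{H}T\, l\,(M\sigma^2+nT(\sigma')^2)\,(t/T)^{-(1-\epsilon)}$, matching \eqref{eq:connected}. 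The transient lower bound \eqref{transient} on $t$ is exactly what guarantees $\Delta(t)$ has decayed enough for $H(t)$ to be a genuine contraction over the window and for the product and exponential estimates to hold. The general case \eqref{eq:general} follows along identical lines, except that no single $G(t)$ need have a spectral gap; instead I work with products of Metropolis matrices over blocks of $\max(T,B)$ steps and replace the hitting-time bound by the universal worst-case estimate $\mathcal{H}=O(n^2 d_{\rm max})$ valid for any connected graph on $n$ nodes of maximum degree $d_{\rm max}$, which explains the substitution of $n^2 d_{\rm max}$ for $\mathcal{H}$ and $\max(T,B)$ for $T$.

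The main obstacle is the coupled single-window contraction: showing rigorously that intermittent, spatially localized measurements together with pure consensus averaging drive the full error down at rate $\Theta(\Delta/(\mathcal{H}T))$, even though consensus is blind to the average mode and the measurement term touches only a few nodes at a time. Making the hitting-time dependence quantitative — translating ``information crosses the graph in $\mathcal{H}$ steps'' into a genuine contraction constant for the time-varying products $\prod H(t)$, which share no common eigenbasis — is the technically delicate part, and the $B$-connected generalization compounds it by removing the per-step spectral gap entirely.
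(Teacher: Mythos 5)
Your high-level architecture (one-step stochastic recursion, a per-window contraction inequality, then stepsize bookkeeping) matches the paper's, and your bookkeeping phase is essentially the paper's Corollary \ref{effdecay}. But the heart of the proof --- the quantitative contraction --- is exactly what you leave unproven, and the route you sketch for it has a genuine flaw. You propose to split $e(t)$ into average and disagreement modes, contract the disagreement via a spectral gap bound $1-\lambda_2(P(t))\geq 1/(c\mathcal{H})$, and then couple the two modes with a Lyapunov function you ``expect to build.'' Two problems. First, your claim that ``$\Theta(\mathcal{H})$ consensus steps shrink the disagreement by a constant factor'' ignores that the Laplacian enters the iteration multiplied by $\Delta(t)$: the per-step disagreement contraction is $1-\Theta(\Delta(t)/\mathcal{H})$, so a window of fixed length $\Theta(\mathcal{H}T)$ contracts the disagreement only by $1-\Theta(\Delta(t)T)$, and since $\Delta(t)\to 0$ no fixed-length window ever gives a constant-factor contraction. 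Second, the coupling between the measurement term (which touches the average mode only through a few coordinates) and the consensus term is precisely the hard part, and it is asserted, not established. The paper sidesteps the mode decomposition entirely: at any measurement time the matrix $A(t)$ is \emph{substochastic} (row sums $3/4$ at measuring nodes), and Lemma \ref{conneig} bounds its top eigenvalue directly, $\lambda_{\rm max}(A(t))\leq 1-1/(24\mathcal{H})$, by an absorption argument --- append an absorbing state to each measuring node, note that $\|p(k)\|_1$ is a survival probability, and bound the absorption time by the hitting time to the measuring node. That single spectral bound contracts \emph{all} modes at once, including the average, so only the per-measurement-window recursion of Corollary \ref{effdecay} remains; no product-of-matrices or Lyapunov-coupling argument is needed.

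The gap is worse in the $B$-connected case. You propose to ``replace the hitting-time bound by the universal worst-case estimate $\mathcal{H}=O(n^2 d_{\rm max})$,'' but when the individual graphs $G(t)$ are disconnected there is no hitting time (and no spectral gap) for any single $G(t)$ to estimate, and the product of Metropolis matrices over a block is not the Metropolis matrix of the union graph, so hitting-time reasoning does not transfer to the time-varying product, which (as you yourself note) shares no common eigenbasis. The paper handles this case with a genuinely different, combinatorial argument: Lemma \ref{lemma:quantdecbound} sorts the coordinates of $v(t_k)$, and by an association argument (charging each gap $(v_i-v_{i+1})^2$ and the two boundary terms to the first edge or measurement event that ``crosses'' the corresponding cut during the block) lower-bounds the accumulated dissipation by a line-graph quadratic form; this is then converted to a contraction via the sieve constant bound $\kappa(L_n)\geq 1/n^2$ of Lemma \ref{lemma:snonnegativity}. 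Nothing in your proposal supplies a substitute for this step, so the claim that the general case ``follows along identical lines'' does not hold.
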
 

Our theorem provides a quantitative bound on the convergence time of the repeated alignment process of Eq. (\ref{nonmeasuringupdate}) and Eq. (\ref{measuringupdate}). We believe this is the first time a convergence time result has been demonstrated in the setting of time-varying (not necessarily connected) graphs, 
intermittent measurements by possibly different nodes, and noisy communications among nodes. The convergence
time expressions are somewhat unwieldy, and we pause now to discuss some of their features. 

First, observe that the convergence times are a sum of two terms: the first which decays with $t$ as $O((\ln t)/t^{1-\epsilon})$ and the second which decays as 
$O(e^{-c \cdot t^{\epsilon}})$ for some parameter $c$ (here $O$-notation hides all terms that do not depend on $t$). In the limit of large $t$, the second will be negligible and we may focus 
our attention solely on the first. Thus our finding is that it is possible to achieve a nearly linear decay with time by picking a stepsize $1/t^{1-\epsilon}$ with
$\epsilon$ close to zero.

Moreover, examining Eq. (\ref{eq:general}), we find that for every choice of $\epsilon \in (0,1)$, the scaling with the number of nodes $n$ is polynomial.  Moreover, in analogy to some recent work on consensus \cite{noot09}, better convergence time bounds are available when the largest degree of any node is small. This is somewhat counter-intuitive  since higher degrees are associated with improved connectivity. A plausible intuitive explanation for this mathematical 
phenomenon is that low degrees ensure that the influence of new measurements on nodes does not get repeatedly diluted in the update process.

Furthermore, while it is possible to obtain a nearly linear decay with the number of iterations $t$ as we just noted, such a 
choice blows up the bound on the transient period before the asymptotic decay bound kicks in. Every choice of $\epsilon$ then provides a trade off between the transient size and the asymptotic rate of decay. This is to be contrasted with the usual situation in distributed optimization (see e.g., \cite{ram-angelia, sn11}) where a specific choice of stepsize usually
results in the best bounds.  


Finally, in the case when all graphs are connected, the effect of network topology on the convergence time comes through the maximum hitting time $\mathcal{H}$ in all the 
individual graphs $G(t)$. There are a variety of results on hitting times for various graphs which may be plugged into Theorem \ref{mainthm} to obtain precise
topology-dependent estimates. We first mention the general result that $\mathcal{H} = O(n^2)$ for the Metropolis chain on an arbitrary connected graph from 
\cite{metropolis}. On a variety of reasonably connected graphs, hitting times are considerably smaller. A recent preprint \cite{luxburg} shows that for many graphs, hitting times are proportional to the inverse degrees. In a 2D or 3D grid, we have that ${\cal H} = \widetilde{O}(n)$ \cite{covertime}, where the notation $\widetilde{O}( f(n) )$ is the same as ordinary $O$-notation with the exception of hiding multiplicative factors which are polynomials in $\log n$. 

We illustrate the convergence times of Theorem \ref{mainthm} with a concrete example. Suppose we have a collection of nodes interconnected in 
(possibly time-varying) 2D grids with a single (possibly different) node sampling at every time. We are interested how the time until $E[Z(t) ~|~ v(1)]$ falls below
$\delta$ scales with the number of nodes $n$ as well as with $\delta$. Let us assume that the dimension $l$ of the vector we are learning as well as the noise variance $\sigma^2$ are constants independent of the number of nodes. Choosing a step size $\Delta(t) = 1/\sqrt{t}$, we have that Theorem \ref{mainthm} implies that
variance $E[Z(t) ~|~ Z(0)]$ will fall below $\delta$ after $\widetilde{O}(n^2/\delta^2)$ steps of the protocol. The exact bound, with all the constants, 
may be obtained from Eq. (\ref{eq:connected}) by plugging in the hitting time of the 2D grid \cite{covertime}.  Moreover, the transient period until this exact bound applies (from Eq. (\ref{transient})) has length $\widetilde{O}(n^2)$. We can obtain a better asymptotic decay by picking a more slowly decaying stepsize, at the expense of lenghtening the transient period. 



\subsection{Related work} We believe that our paper is the first to \ao{derive rigorous convergence time results for} the problem of cooperative multi-agent learning by a network \ao{subject to} unpredictable communication disruptions and intermittent measurements. The key features of our model are 1) its cooperative nature (many nodes working together) 2) its reliance only on distributed and local observations 3) the incorporation of time-varying communication restrictions 4) noisy measurements and noisy communication. 

Naturally, our work is not the first attempt to fuse learning algorithms with distributed control or multi-agent settings. \ao{Indeed, the study of learning in games is a classic subject which has attracted considerable attention within the last couple of decades due in part to its applications to multi-agent systems.} We refer the reader to the recent papers \cite{ma1, ma2, ma3, ma4, ma5, ma6, ma7, ma8, ma9, ma10, jadb-learn, jadb-more-learn} \ao{as well as the classic works \cite{ma11, ma13}} which study multi-agent learning in a game-theoretic context.  \ao{Moreover, the related problem of distributed reinforcement learning has attracted some recent attention; we refer the reader to \cite{ma11, ma12, ma14}.} We mention especially the recent surveys \ao{\cite{ma15, ma16}}. \ao{Moreover, we note that much of the recent literature in distributed robotics has focused on distributed algorithms robust to faults and communication link failures. We refer the reader to the representative papers \cite{r1, r2}. }

Our work here is very much in the spirit of the recent literature on distributed filtering \cite{olfati1, olfati2, olfati3, rantzer, sper, sayed1, sayed2, sayed3, moura1, moura6, diffadpt} and
especially \cite{carli}. These works consider the problem of tracking a time-varying signal from local measurements by each node, which are then repeatedly combined through
a consensus-like iteration. The above-referenced papers consider a variety of schemes to this effect and obtain bounds on their performance, usually stated in 
terms of solutions to certain Lyapunov equations, or in terms of eigenvalues of certain matrices on fixed graphs. 

Our work is most closely related to a number of recent papers on distributed detection \cite{moura1, moura2, moura3, moura5, moura6, moura7, moura8, moura9} which seek to evaluate protocols for networked cooperative hypothesis testing and related problems. Like the previously mentioned work on distributed filtering, these papers use the idea of local iterations which are combined through a distributed consensus update, termed ``consensus plus innovations''; a similar idea is called ``diffusion adaptation'' in \cite{diffadpt}. This literature clarified a number of distinct phenomena in cooperative filtering and estimation; some of the contributions include working out tight bounds on error exponents for choosing the right hypothesis and other performance measures for a variety of settings (e.g., \cite{moura2, moura3, moura5}), as well as establishing a number of fundamental limits for distributed parameter estimation \cite{moura7}. 

In this work, we consider the related (and often simpler) question of learning a static unknown vector. However, 
we derive results which are considerably stronger compared to what is available in the previous literature, obtaining convergence rates in settings when the network is time-varying, measurements are intermittent, and communication is noisy. {\em Most importantly, we are able to explicitly bound the speed of convergence to the unknown vector $\mu$ in these unpredictable settings in terms of network size and the combinatorial features (i.e. hitting times) of the networks.}

\subsection{Outline} We now outline the remainder of the paper. Section \ref{sec:conv}, which comprises most of our paper, contains the proof of Proposition \ref{thm:conv} as well as the main result, Theorem \ref{mainthm}. The proof is broken up into several distinct pieces since some steps 
are essentially lengthy exercises in analysis. We begin in Section \ref{subsec:aclass} which contains some basic facts about symmetric substochastic
matrices which will be useful. The following Section \ref{subsec:decay} is devoted solely to analyzing a particular inequality. We will later show 
that the expected variance satisfies this inequality and apply the decay bounds we derived in that section.  We 
then begin analyzing properties of our protocol in Section \ref{subsec:sieve}, before finally proving Proposition \ref{thm:conv} and Theorem \ref{mainthm} in Section \ref{sec:proof}.  Finally, Section \ref{sec:simul} contains some simulations of our protocol and
Section \ref{sec:concl} concludes with a summary of our results and a list of several open problems. 
 
%
 
\section{Proof of the main result\label{sec:conv}}

The purpose of this section is to prove Theorem \ref{mainthm}; we prove Proposition \ref{thm:conv} along the way.  
We note that the first several subsections contain some basic results which
we will have occasion to use later; it is only in Section \ref{sec:proof} that we begin directly proving Theorem \ref{mainthm}.
We begin with some preliminary definitions. 


\subsection{Definitions}
Given a nonnegative matrix $A \in \R^{n \times n}$, we will use $G(A)$ to
denote the graph whose edges correspond to the positive entries of $A$ in the following way: $G(A)$ is the directed graph on the vertices
$\{1,2, \ldots, n\}$ with edge set $\{ (i,j) ~|~ a_{ji}>0 \}$.  Note that if $A$ is symmetric then the graph $G(A)$ will be \ao{undirected}. We will use the standard convention of $\e_i$ to mean
the $i$'th basis column vector and $\1$ to mean the all-ones vector. Finally, we will use $r_i(A)$ to denote the row sum of the $i$'th row of $A^2$ and $R(A)={\rm diag}(r_1(A), \ldots,r_n(A))$. When the argument matrix $A$ is clear from context, we will simply write $r_i$ and $R$ for $r_i(A), R(A)$.

\subsection{A few preliminary lemmas\label{subsec:aclass}} In this subsection we prove a few lemmas which we will find useful in the proofs of our main theorem. Our first lemma gives a decomposition of a symmetric matrix and its immediate corollary provides a way to bound the change in norm arising from multiplication by a symmetric matrix. Similar statements were proved in \cite{bdx04},\cite{noot09}, and \cite{TN11}. 

\smallskip

\begin{lemma} For any symmetric matrix $A$,  \[ A^2 = \ao{R} - \sum_{k<l} [A^2]_{kl} (\e_k - \e_l) (\e_k - \e_l)^T.\] \label{lemma:decomposition} \end{lemma}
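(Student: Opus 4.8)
The plan is to verify the identity entry by entry, exploiting the symmetry of $A^2$. Write $B = A^2$ and note that since $A$ is symmetric, $B$ is symmetric as well, so $[B]_{kl} = [B]_{lk}$ for all $k,l$. First I would record the entries of the rank-one outer product $(\e_k - \e_l)(\e_k - \e_l)^T$: it carries $+1$ in positions $(k,k)$ and $(l,l)$, it carries $-1$ in positions $(k,l)$ and $(l,k)$, and it vanishes everywhere else.

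Next I would assemble the full sum $S := \sum_{k<l} [B]_{kl} (\e_k - \e_l)(\e_k - \e_l)^T$ by collecting the contributions to each entry separately. For an off-diagonal position $(i,j)$ with $i \neq j$, only the single unordered pair $\{k,l\} = \{i,j\}$ contributes, which gives $[S]_{ij} = -[B]_{ij}$ (here symmetry lets me ignore which of $i,j$ plays the role of the smaller index). For a diagonal position $(i,i)$, every pair containing $i$ contributes a $+[B]_{kl}$ term, so $[S]_{ii} = \sum_{l>i} [B]_{il} + \sum_{k<i} [B]_{ki} = \sum_{j \neq i} [B]_{ij}$, where the last equality again invokes the symmetry of $B$.

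Finally I would compare $R - S$ against $B$ directly. Since $r_i$ is by definition the row sum $\sum_{j} [B]_{ij}$ of the $i$-th row of $A^2$, and $R$ is diagonal, the diagonal entries of $R - S$ are $r_i - \sum_{j \neq i} [B]_{ij} = [B]_{ii}$, while the off-diagonal entries are $0 - (-[B]_{ij}) = [B]_{ij}$. Hence $R - S = B = A^2$, which is exactly the claimed decomposition.

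The argument is entirely elementary; the only point that demands care is the bookkeeping of the diagonal entries, where one must correctly tally both the $(k,k)$ and $(l,l)$ contributions of each outer product and then combine the symmetry of $A^2$ with the fact that $r_i$ is the \emph{full} row sum (diagonal term included) to recover precisely $[B]_{ii}$ rather than $\sum_{j \neq i} [B]_{ij}$.
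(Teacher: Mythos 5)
Your proof is correct and takes essentially the same approach as the paper: an elementary entrywise comparison of the two sides, using the symmetry of $A^2$ and the fact that each outer product $(\e_k-\e_l)(\e_k-\e_l)^T$ contributes $+1$ on the diagonal and $-1$ off the diagonal. The only cosmetic difference is how the diagonal is handled: the paper notes that each outer product has zero row sums and both sides are symmetric, so it suffices to check entries with $i<j$, whereas you tally the diagonal contributions explicitly and cancel them against the full row sum $r_i$ --- both are valid, and your bookkeeping of the diagonal is carried out correctly.
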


\begin{proof} Observe that each term $(\e_k - \e_l) (\e_k - \e_l)^T$ in the sum on the right-hand side has row sums of zero, and consequently both sides of the above equation have identical row sums. Moreover, both sides of the above equation are symmetric. This implies it suffices to prove that all the $(i,j)$-entries of both sides
with $i<j$ are the same. But on both sides, the $(i,j)$'th element when $i<j$ is $[A^2]_{ij}$.  \end{proof}

\smallskip

\ao{This lemma may be used to bound how much the norm of a vector changes after multiplication by a symmetric matrix.}

\smallskip

\begin{corollary} \label{sievebound} For any symmetric matrix $A$,
\[ ||Ax||_2^2  ~=~  ||x||_2^2 - \sum_{j=1}^n (1-r_j) x_j^2 \aor{-} \sum_{k<l} [A^2]_{kl}(x_k - x_l)^2.  \]    \end{corollary}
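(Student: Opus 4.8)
The plan is to reduce the claim directly to Lemma~\ref{lemma:decomposition} by rewriting the squared norm as a quadratic form. Since $A$ is symmetric, $||Ax||_2^2 = x^T A^T A x = x^T A^2 x$, so it suffices to evaluate $x^T A^2 x$ using the decomposition already established. Substituting $A^2 = R - \sum_{k<l} [A^2]_{kl} (\e_k - \e_l)(\e_k - \e_l)^T$ and expanding by linearity, I would handle the two resulting pieces separately.

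For the diagonal piece, since $R = \mathrm{diag}(r_1, \ldots, r_n)$, the quadratic form is simply $x^T R x = \sum_{j=1}^n r_j x_j^2$. For each rank-one piece, I would use $x^T (\e_k - \e_l)(\e_k - \e_l)^T x = \big( (\e_k - \e_l)^T x \big)^2 = (x_k - x_l)^2$, so that summing over $k<l$ reproduces exactly the term $\sum_{k<l} [A^2]_{kl}(x_k - x_l)^2$. Combining, this gives the intermediate identity $||Ax||_2^2 = \sum_{j=1}^n r_j x_j^2 - \sum_{k<l} [A^2]_{kl}(x_k - x_l)^2$.

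The last step is cosmetic bookkeeping to match the stated form: writing $\sum_{j=1}^n r_j x_j^2 = \sum_{j=1}^n x_j^2 - \sum_{j=1}^n (1-r_j) x_j^2 = ||x||_2^2 - \sum_{j=1}^n (1-r_j) x_j^2$ and substituting this for the first sum yields precisely the claimed equality. I do not anticipate any genuine obstacle here; the corollary is an immediate consequence of Lemma~\ref{lemma:decomposition}, and the only thing requiring care is correctly identifying $x^T(\e_k-\e_l)(\e_k-\e_l)^T x$ with $(x_k - x_l)^2$ and keeping the index conventions straight.
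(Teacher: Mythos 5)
Your proposal is correct and follows essentially the same route as the paper's own proof: both expand $\|Ax\|_2^2 = x^T A^2 x$ via Lemma~\ref{lemma:decomposition}, evaluate the diagonal and rank-one quadratic forms, and rearrange $\sum_j r_j x_j^2$ into $\|x\|_2^2 - \sum_j (1-r_j)x_j^2$. There is nothing to add.
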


\begin{proof} By Lemma \ref{lemma:decomposition}, \begin{eqnarray*} ||Ax||_2^2 & = & 
 x^T A^2 x \\
& = & x^T R x - \sum_{k<l} [A^2]_{kl} x^T (\e_k - \e_l) (\e_k - \e_k)^T x \\
& = & \sum_{j=1}^n r_j x_j^2 - \sum_{k<l} [A^2]_{kl} (x_k - x_l)^2. 
\end{eqnarray*} Thus the decrease in squared norm from $x$ to $Ax$ is 
\[ ||x||_2^2 - ||Ax||_2^2 = \sum_{j=1}^n (1-r_j) x_j^2 + \sum_{k<l} [A^2]_{kl}(x_k - x_l)^2. \] \end{proof}

\bigskip We now introduce a measure of graph connectivity which we call the {\em sieve constant} of a graph, defined as follows. For a nonnegative, stochastic matrix $A \in \R^{n \times n}$, the sieve constant $\kappa(A)$ is defined as \[ \kappa(A) = \min_{m=1, \ldots, n} ~~\min_{||x||_2=1} ~~ x_m^2 + \sum_{k \neq l} a_{kl} (x_k - x_l)^2. \]  For an undirected graph $G=(V,E)$,  the sieve constant $\kappa(G)$ denotes the sieve constant of the Metropolis matrix, which is the stochastic matrix with
 \begin{equation*}
a_{ij} =
\begin{cases} \frac{1}{2 \max(d_i, d_j)}, & \text{ if } (i,j) \in E \mbox{ and } i \neq j,\\
0, &\text{ if } (i,j) \notin E. 
\end{cases}
\end{equation*} The sieve constant is, as far as we are aware, a novel graph parameter: we are not aware of any previous works making use of it. Our name is due to the geometric picture inspired by the above optimization problem: one entry of the vector $x$ must be held close to zero while keeping it close to all the other entries, with $\kappa(A)$ measuring how much ``sieves'' through the gaps. 

\smallskip

The sieve constant will feature prominently in our proof of Theorem \ref{mainthm}; we will use it in conjunction with Lemma \ref{sievebound} to bound 
how much the norm of a vector decreases after multiplication by a substochastic, symmetric matrix. We will require a bound on how small $\kappa(A)$ can be
in terms of the combinatorial features of the graph $G(A)$; such a bound is given by the following lemma. 

\smallskip

\begin{lemma} \label{lemma:snonnegativity} For any nonnegative, stochastic $A$, we have $\kappa(A) \geq 0$. Moreover, denoting the smallest positive entry of $A$ by $\eta$, we have that if the graph $G(A)$ is weakly connected\footnote{A directed
graph is weakly connected if the undirected graph obtained by ignoring the orientations of the edges is connected.} then $$\kappa(A) \geq \frac{\eta}{n D },$$ where $D$ is the (weakly-connected) diameter of $G(A)$.  For a symmetric matrix $A$, this lower bound can be doubled. 
\end{lemma}

\smallskip

\begin{proof} It is evident from the definition of $\kappa(A)$ that it is necessarily nonnegative. Let $E$ be the set of edges obtained by considering all the (directed) edges in $G(A)$ and ignoring the orientation of each edge. We will show that  for any $m$,  
\[ \min_{||x||_2=1} ~~~~~  x_m^2 + \sum_{(i,j) \in E} (x_i - x_j)^2 \geq \frac{1}{D n}  
\] This then implies the lemma immediately  from the definition of the sieve constant. 

Indeed, we may suppose $m=1$ without loss of generality. Suppose  the minimum in the above optimization problem is achieved by the vector $x$; let $Q$ be the index of the component of $x$ with 
the largest absolute value; without loss of generality, we may suppose that the shortest path connecting $1$ and $Q$ is $1-2-\cdots -Q$ (we can simply relabel the nodes to make this true). Moreover, we may also assume $x_Q > 0$ (else, we can just replace $x$ with $-x$). 

\ao{Now the assumptions that $||x||_2=1$, that
$x_Q$ is the largest component of $x$ in absolute value, and  that $x_Q > 0$} imply that  $x_Q \geq 1/\sqrt{n}$ or
\[ (x_1-0) + (x_2 - x_1) + \cdots + (x_Q - x_{Q-1}) \geq \frac{1}{\sqrt{n}} \] and applying Cauchy-Schwarz
\[ Q ( x_1^2 + (x_2 - x_1)^2 + \cdots (x_Q - x_{Q-1})^2) \geq \frac{1}{n}, \] or 
\[ x_1^2 + (x_2 - x_1)^2 + \cdots (x_Q- x_{Q-1})^2 \geq \frac{1}{Qn} \geq \frac{1}{ D n}. \] We note this proof is inspired by a similar argument found in \cite{LO81}. \end{proof}

\subsection{A decay inequality and its consequences\label{subsec:decay}} We continue here with some preliminary results which we will use in the course of proving Theorem \ref{mainthm}. The proof of that theorem will proceed by arguing that $a(t)= E[Z(t) ~|~ Z(0)]$ will satisfy the inequality  
\begin{equation} \label{exampledecay} a(t_{k+1}) \leq  \left( 1 - \frac{q}{t_{k+1}^{1-\epsilon}} \right) a({t_k}) + \frac{d}{t_k^{2 - 2 \epsilon}} \end{equation} for some increasing integer sequence $t_k$ and some positive constants $q,d$. We will not turn to deriving this  inequality for $E[Z(t) ~|~ Z(0)]$ now; this will be done later in Section \ref{sec:proof}. The current subection is instead devoted to analyzing the consequences of the inequality, specifically deriving a bound on how fast $a(t_k)$ decays as a function of $q,d$ and the sequence $t_k$. 

The only result from this subsection which will be used later is Corollary \ref{effdecay}; all the other lemmas proved here are merely steps on the way of the 
proof of that corollary. 

We begin with a lemma which bounds some of the products we will shortly encounter.

\smallskip

\begin{lemma} Suppose $q \in (0,1]$ and $\epsilon \in (0,1)$ and for $2 \leq a < b$ define  \[ \Phi_q(a,b) = \prod_{t=a}^{b-1} \left( 1 - \frac{q}{t^{1-\epsilon}} \right). \] Moreover, we will adopt the convention that $\Phi_q(a,b)=1$ when $a=b$. Then we have 
\[ \Phi_q(a,b) \leq e^{-q(b^{\epsilon} - a^{\epsilon})/\epsilon}. \] \label{phibound}
\end{lemma}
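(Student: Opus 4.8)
The plan is to turn the product into a sum inside the exponent using the elementary inequality $1-x \le e^{-x}$, and then bound that sum from below by a matching integral. First I would record that every factor of $\Phi_q(a,b)$ is strictly positive: since $q \le 1$, $\epsilon \in (0,1)$, and $t \ge a \ge 2$, we have $t^{1-\epsilon} \ge 2^{1-\epsilon} > 1$, so $q/t^{1-\epsilon} < 1$ and hence $1 - q/t^{1-\epsilon} \in (0,1)$. This positivity is exactly what justifies multiplying the termwise bounds together without reversing any inequality.

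Applying $1 - x \le e^{-x}$ to each factor and taking the product then gives
\[ \Phi_q(a,b) \le \prod_{t=a}^{b-1} e^{-q/t^{1-\epsilon}} = \exp\left( -q \sum_{t=a}^{b-1} t^{\epsilon - 1} \right), \]
so the claim reduces to the purely real-analytic estimate $\sum_{t=a}^{b-1} t^{\epsilon-1} \ge (b^{\epsilon} - a^{\epsilon})/\epsilon$. For this I would use that $s \mapsto s^{\epsilon-1}$ is decreasing on $[1,\infty)$ (because $\epsilon - 1 < 0$), so on each interval $[t,t+1]$ its value at the left endpoint dominates the average, i.e.\ $t^{\epsilon-1} \ge \int_t^{t+1} s^{\epsilon-1}\,ds$. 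Summing over $t = a, \ldots, b-1$ telescopes the integrals into
\[ \sum_{t=a}^{b-1} t^{\epsilon-1} \ge \int_a^b s^{\epsilon-1}\,ds = \frac{b^{\epsilon} - a^{\epsilon}}{\epsilon}. \]
Substituting this into the exponent above and using that the exponential is increasing yields $\Phi_q(a,b) \le e^{-q(b^{\epsilon}-a^{\epsilon})/\epsilon}$, as desired.

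I do not expect a serious obstacle in this argument; it is essentially a standard sum-versus-integral comparison. The only two points that require care are verifying the positivity of the factors (so that the termwise $1-x \le e^{-x}$ bounds may legitimately be multiplied), and getting the direction of the integral comparison right — namely that for a \emph{decreasing} integrand the sum of left-endpoint values \emph{overestimates} the integral, which is what produces a lower bound on the sum and hence an upper bound on $\Phi_q(a,b)$.
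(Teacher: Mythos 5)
Your proof is correct and is essentially the paper's own argument: the bound $1-x \le e^{-x}$ applied factorwise is the same as the paper's $\ln(1-x)\le -x$ applied to the logarithm of the product, followed by the identical comparison of the sum $\sum_{t=a}^{b-1} t^{\epsilon-1}$ with the integral $\int_a^b s^{\epsilon-1}\,ds$. Your explicit verification that each factor is positive (so the termwise bounds can be multiplied) is a detail the paper leaves implicit, but it does not change the route.
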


\smallskip

\begin{proof} Taking the logarithm of the definition of $\Phi_q(a,b)$, and using the inequality $\ln(1-x) \leq -x$,
\[ \ln \Phi_q(a,b) = \sum_{t=a}^{b-1} \ln \left( 1 - \frac{q}{t^{1-\epsilon}} \right) \leq - \sum_{t=a}^{b-1} \frac{q}{t^{1-\epsilon}}
\leq  -q \frac{b^{\epsilon} - a^{\epsilon}}{\epsilon}, \] where, in the last inequality, we applied the standard technique of lower-bounding a 
nonincreasing nonnegative sum by an integral. 
\end{proof} 

\smallskip

We next turn to a lemma which proves yet another bound we will need, namely a lower bound on $t$ such that
the inequality $t \geq \beta \log t$ holds. 

\smallskip

\begin{lemma} Suppose $\beta \geq 3$ and $t \geq  3 \beta \ln \beta$. Then $\beta \ln t \leq t$. \label{betabound}
\end{lemma}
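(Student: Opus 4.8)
The plan is to reduce the claim to checking a single boundary value by a monotonicity argument. Define $f(t) = t - \beta \ln t$ on $t > 0$; the desired inequality $\beta \ln t \le t$ is exactly $f(t) \ge 0$. Computing $f'(t) = 1 - \beta/t$, we see that $f$ is nondecreasing on the interval $[\beta, \infty)$. Hence, provided the hypothesis point $t_0 := 3\beta \ln \beta$ lies in this interval, it will suffice to verify $f(t_0) \ge 0$, since then $f(t) \ge f(t_0) \ge 0$ for every $t \ge t_0$.

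First I would check that $t_0 \ge \beta$, so that the monotonicity applies on all of $[t_0, \infty)$. Since $\beta \ge 3$ we have $\ln \beta \ge \ln 3 > 1$, whence $3 \ln \beta > 1$ and therefore $t_0 = 3\beta \ln \beta > \beta$, as needed. It then remains to establish $f(t_0) \ge 0$, i.e. $3\beta \ln \beta \ge \beta \ln(3\beta \ln \beta)$. Dividing through by $\beta > 0$ and expanding the logarithm, this is equivalent to $3 \ln \beta \ge \ln 3 + \ln \beta + \ln\ln\beta$, that is, $2\ln\beta \ge \ln 3 + \ln\ln\beta$.

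I would prove this last inequality term by term: because $\beta \ge 3$ we have $\ln 3 \le \ln \beta$, and because $\beta \ge 3 > e$ we have $\ln\beta \ge 1$, so that $\ln\ln\beta \le \ln\beta$. Adding these two bounds yields $\ln 3 + \ln\ln\beta \le 2\ln\beta$, which is precisely what we need. This completes the reduction chain back to $f(t_0)\ge 0$ and hence to the lemma.

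The argument is essentially routine, and the only place requiring any care is the endpoint estimate $f(t_0) \ge 0$; I expect this to be the main (though still elementary) obstacle, since one must confirm that the crude term-by-term bound is strong enough for the specific constant $3$ in the hypothesis. If that bound turned out to be too weak, I would instead study the auxiliary function $g(\beta) = 2\ln\beta - \ln\ln\beta - \ln 3$ directly, noting that $g'(\beta) = \frac{1}{\beta}\left(2 - 1/\ln\beta\right) > 0$ for $\beta \ge 3$ and that $g(3) > 0$, which again gives $g(\beta) \ge 0$ throughout the range and thus the same conclusion.
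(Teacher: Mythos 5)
Your proof is correct and follows essentially the same route as the paper: verify the inequality at the endpoint $t_0 = 3\beta\ln\beta$ via the expansion $\beta\ln(3\beta\ln\beta) = \beta\ln 3 + \beta\ln\beta + \beta\ln\ln\beta \leq 3\beta\ln\beta$, then extend to all $t \geq t_0$ by noting that $t - \beta\ln t$ is nondecreasing for $t \geq \beta$. Your explicit check that $t_0 \geq \beta$ (so the monotonicity applies) is a small point the paper leaves implicit, but otherwise the two arguments coincide.
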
 

\smallskip

\begin{proof}  On the one hand, the inequality holds at $t= 3 \beta \ln \beta$:
\[ \beta \ln (3 \beta \ln \beta) = \beta \ln 3 + \beta \ln \beta + \beta \ln \ln \beta \leq 3 \beta \ln \beta. \] On the other hand, 
the derivative of $t - \beta \ln t$ is nonnegative for $t \geq \beta$, so that the inequality continues to hold for all $t \geq 3 \beta \ln \beta$. 
\end{proof} 

\smallskip

Another useful bound is given in the following lemma. 

\smallskip

\begin{lemma} Suppose $\epsilon \in (0,1)$ and $0 < \alpha \leq b$. Then 
\[ (b-\alpha)^{\epsilon} \leq b^{\epsilon} -  \frac{\epsilon}{b^{1-\epsilon}} \alpha \] \label{epsilonpower}
\end{lemma}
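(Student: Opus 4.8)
The plan is to recognize this as the standard tangent-line (supporting-line) inequality for the concave function $g(x) = x^\epsilon$. Since $\epsilon \in (0,1)$, one computes $g''(x) = \epsilon(\epsilon-1)x^{\epsilon-2} < 0$ for $x > 0$, so $g$ is concave on $[0,\infty)$, and the graph of a concave function lies below each of its tangent lines. I would make this single fact the entire conceptual content of the proof.

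Concretely, I would write down the tangent line to $g$ at the point $x = b$ (assuming $b > 0$, as holds in every application, where $b$ plays the role of an iteration index): this line is $\ell(x) = b^\epsilon + \epsilon b^{\epsilon-1}(x - b)$. Concavity gives $g(x) \leq \ell(x)$ for every $x \in [0,\infty)$. I would then simply evaluate both sides at $x = b - \alpha$, which lies in $[0,\infty)$ precisely because the hypothesis $\alpha \leq b$ guarantees $b - \alpha \geq 0$. This yields $(b-\alpha)^\epsilon \leq b^\epsilon - \epsilon b^{\epsilon-1}\alpha$, and rewriting $\epsilon b^{\epsilon-1} = \epsilon / b^{1-\epsilon}$ produces exactly the claimed bound.

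If I preferred to avoid invoking concavity as a black box, I would instead fix $b$, define $h(\alpha) = b^\epsilon - (\epsilon/b^{1-\epsilon})\,\alpha - (b-\alpha)^\epsilon$ on $(-\infty, b]$, and verify that $h(0) = 0$ while $h'(\alpha) = \epsilon\bigl[(b-\alpha)^{\epsilon-1} - b^{\epsilon-1}\bigr]$ is negative for $\alpha < 0$ and positive for $0 < \alpha \leq b$, using that $t \mapsto t^{\epsilon-1}$ is decreasing because $\epsilon - 1 < 0$. Hence $\alpha = 0$ is the global minimum of $h$, so $h \geq 0$ throughout, which is again the inequality.

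There is essentially no real obstacle here; the only point requiring a moment's care is the domain check, namely ensuring $b - \alpha \geq 0$ so that $(b-\alpha)^\epsilon$ is well defined and the supporting-line inequality applies — and this is exactly what the hypothesis $\alpha \leq b$ supplies. One should also note $b > 0$ (valid in context, since $b$ is an iteration count); the degenerate case $b = 0$ would force $\alpha \leq 0$ and is never needed. Finally, I would remark that the argument makes no sign assumption on $\alpha$, so it covers negative $\alpha$ without modification.
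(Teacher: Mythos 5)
Your proof is correct, but it runs along a different axis than the paper's. You fix $\epsilon$ and exploit concavity of the map $x \mapsto x^{\epsilon}$ in the \emph{base}: the graph of a concave function lies below its tangent line at $x=b$, and evaluating that tangent bound at $x=b-\alpha$ (legitimate since $\alpha \leq b$ gives $b-\alpha \geq 0$) is exactly the claim. The paper instead fixes $\alpha$ and $b$ and views both sides as functions of the \emph{exponent} $\epsilon$: the left side $(b-\alpha)^{\epsilon}$ is convex in $\epsilon$, and the two sides coincide at $\epsilon=0$ and $\epsilon=1$, so the convex left side lies below the right side on all of $[0,1]$ by a chord argument. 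Strictly speaking, the paper's argument needs one extra normalization to be airtight — the right side $b^{\epsilon} - \epsilon b^{\epsilon-1}\alpha$ is not affine in $\epsilon$ as written, so one should first divide through by $b^{\epsilon}$ (equivalently set $u=\alpha/b$ and prove $(1-u)^{\epsilon} \leq 1-\epsilon u$), after which the right side is genuinely linear in $\epsilon$ and the chord-above-convex-function reasoning applies. Your tangent-line route avoids this subtlety entirely and is self-contained; what the paper's route buys is brevity and the structural observation that the bound is tight at both endpoint exponents, whereas yours makes transparent that it is tight at $\alpha=0$ and covers negative $\alpha$ without any extra care. Both correctly require $b>0$, which you are right to flag since otherwise $\epsilon/b^{1-\epsilon}$ is undefined.
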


\begin{proof} We may rewrite the inequality as \[ b \left( \frac{b - \alpha}{b} \right)^{\epsilon} \leq b - \epsilon a \] Note that for fixed $\alpha \leq b$ the expression on the left is a convex function of $\epsilon$ and we have equality for both $\epsilon=0$ and
$\epsilon=1$. By convexity this implies the inequality for all $\epsilon \in [0,1]$.
\end{proof} 

\smallskip

We now combine Lemma \ref{phibound} and Lemma \ref{epsilonpower} to obtain a convenient bound on $\Phi_q(a,b)$ whenever $a$ is not too close to $b$.

\smallskip

\begin{lemma} Suppose $q \in (0,1]$ and $\epsilon \in (0,1)$.  
Then if $a,b$ are integers satisfying $2 \leq a < b$ and $a \leq b - \frac{2}{q} b^{1-\epsilon} \ln (b) $,  we have \label{cubedecay}  \[ \Phi_q(a,b) \leq \frac{1}{b^2} \] 
\end{lemma}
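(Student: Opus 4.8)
The plan is to chain together the two previous lemmas, \ref{phibound} and \ref{epsilonpower}, with the constant $2/q$ in the hypothesis engineered precisely so that the final exponent comes out to $-2\ln b$. The whole argument is a short computation, so I do not expect any real obstacle; the only thing requiring care is checking that the side condition $\alpha \le b$ needed to invoke Lemma \ref{epsilonpower} is met.

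First I would set $\alpha = \frac{2}{q} b^{1-\epsilon} \ln(b)$, so that the hypothesis reads $a \le b - \alpha$. Since $x \mapsto x^\epsilon$ is increasing for $\epsilon > 0$, this gives $a^\epsilon \le (b - \alpha)^\epsilon$. Before applying Lemma \ref{epsilonpower} I would verify its hypothesis $\alpha \le b$: from $a \ge 2$ and $a \le b - \alpha$ we get $\alpha \le b - a \le b - 2 < b$, as required. Lemma \ref{epsilonpower} then yields
\[ a^\epsilon \le (b-\alpha)^\epsilon \le b^\epsilon - \frac{\epsilon}{b^{1-\epsilon}} \alpha = b^\epsilon - \frac{\epsilon}{b^{1-\epsilon}} \cdot \frac{2}{q} b^{1-\epsilon} \ln b = b^\epsilon - \frac{2\epsilon}{q} \ln b, \]
where the chosen form of $\alpha$ makes the $b^{1-\epsilon}$ factors cancel. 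Rearranging, this is exactly the lower bound $b^\epsilon - a^\epsilon \ge \frac{2\epsilon}{q} \ln b$.

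Finally I would feed this into Lemma \ref{phibound}, which guarantees $\Phi_q(a,b) \le e^{-q(b^\epsilon - a^\epsilon)/\epsilon}$. Substituting the bound just obtained,
\[ \Phi_q(a,b) \le \exp\!\left( -\frac{q}{\epsilon} \cdot \frac{2\epsilon}{q} \ln b \right) = e^{-2 \ln b} = \frac{1}{b^2}, \]
which is the claim. The constant $2/q$ in the hypothesis is thus seen to be the natural choice that cancels both $q$ and $\epsilon$ and leaves precisely the factor $2$ multiplying $\ln b$; if one wanted $\Phi_q(a,b) \le b^{-c}$ instead, one would simply replace $2$ by $c$ throughout.
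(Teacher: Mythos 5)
Your proof is correct and follows essentially the same route as the paper's: apply Lemma \ref{epsilonpower} with $\alpha = \frac{2}{q} b^{1-\epsilon}\ln b$ to get $b^\epsilon - a^\epsilon \geq \frac{2\epsilon}{q}\ln b$, then conclude via Lemma \ref{phibound}. Your explicit check of the side condition $\alpha \leq b$ is a small addition the paper leaves implicit, but otherwise the two arguments coincide.
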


\smallskip

\begin{proof} Indeed, observe that as a consequence of Lemma \ref{epsilonpower},  
\[ b^{\epsilon} - a^{\epsilon} \geq b^{\epsilon} - (b - \frac{2}{q} b^{1-\epsilon} \ln (b))^{\epsilon} \geq b^{\epsilon} - ( b^{\epsilon} - \frac{\epsilon}{b^{1-\epsilon}} \frac{2}{q} b^{1-\epsilon} \ln b ) =  \frac{2 \epsilon}{q}   \ln b \] and
consequently \[ e^{-q \frac{b^\epsilon - a^\epsilon}{\epsilon}} \leq e^{-2 \ln b} = \frac{1}{b^2}. \] The claim now follows by Lemma \ref{phibound}. 
\end{proof}

\smallskip

The previous lemma suggests that as long as the distance between $a$ and $b$ is at least $(2/q) b^{1-\epsilon} \ln b$, then $\Phi_q(a,b)$ will be small. 
The following lemma provides a bound on how long it takes until the distance from $b/2$ to $b$ is at least this large. 

\smallskip

\begin{lemma} Suppose $b \geq \left[ \frac{12}{q\epsilon} \ln \frac{4}{q\epsilon} \right]^{1/\epsilon}$, $q \in (0,1]$, and $\epsilon \in (0,1)$. Then $b - \frac{2}{q} b^{1-\epsilon} \ln(b) \geq b/2$. 
\label{halflemma}
\end{lemma}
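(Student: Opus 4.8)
The plan is to reduce the stated inequality to the elementary bound already established in Lemma \ref{betabound} by a change of variables. First I would rewrite the desired conclusion in a cleaner equivalent form. Subtracting $b/2$ from both sides of $b - \frac{2}{q} b^{1-\epsilon} \ln(b) \geq b/2$ and then multiplying through by $2 b^{\epsilon - 1}$ (legitimate since $b > 0$), the conclusion becomes exactly
\[ b^{\epsilon} \geq \frac{4}{q} \ln b. \]
So it suffices to establish this single inequality, and everything reduces to controlling $\ln b$ against a power of $b$.

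Next I would introduce the substitution $t = b^{\epsilon}$ together with the constant $\beta = \frac{4}{q\epsilon}$. Since $\epsilon \ln b = \ln(b^{\epsilon})$, the right-hand side rewrites as $\frac{4}{q}\ln b = \frac{4}{q\epsilon}\,\epsilon \ln b = \beta \ln(b^{\epsilon})$, so the target inequality is precisely $t \geq \beta \ln t$. This is the conclusion of Lemma \ref{betabound}, whose hypotheses I now verify. First, $\beta \geq 3$: because $q \leq 1$ and $\epsilon < 1$ we have $q\epsilon < 1$, hence $\beta = \frac{4}{q\epsilon} > 4 \geq 3$. Second, the required lower bound $t \geq 3\beta \ln \beta$ is exactly the hypothesis of the present lemma in disguise: raising $b \geq \left[\frac{12}{q\epsilon}\ln\frac{4}{q\epsilon}\right]^{1/\epsilon}$ to the power $\epsilon$ gives
\[ t = b^{\epsilon} \geq \frac{12}{q\epsilon}\ln\frac{4}{q\epsilon} = 3 \cdot \frac{4}{q\epsilon}\ln\frac{4}{q\epsilon} = 3\beta \ln \beta. \]

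With both hypotheses confirmed, Lemma \ref{betabound} yields $t \geq \beta \ln t$, i.e. $b^{\epsilon} \geq \frac{4}{q}\ln b$, which is the equivalent form of the claim derived in the first step, completing the argument. I do not anticipate a genuine obstacle here: the only nontrivial step is spotting the substitution $t = b^{\epsilon}$, $\beta = \frac{4}{q\epsilon}$ that turns the peculiar bound $\left[\frac{12}{q\epsilon}\ln\frac{4}{q\epsilon}\right]^{1/\epsilon}$ into the form $3\beta\ln\beta$ expected by Lemma \ref{betabound}; once that is seen, the remaining work is the routine algebraic rearrangement of the conclusion and the verification that $\beta \geq 3$.
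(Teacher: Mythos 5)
Your proposal is correct and follows essentially the same route as the paper: rearranging the claim to $b^{\epsilon} \geq \frac{4}{q}\ln b$, substituting $t = b^{\epsilon}$, and invoking Lemma \ref{betabound} with $\beta = \frac{4}{q\epsilon}$. The only difference is that you spell out the verification that $\beta \geq 3$ and the hypothesis $t \geq 3\beta\ln\beta$, which the paper leaves implicit.
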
 
\begin{proof} Rearranging, we need to argue that 
\[ b^{\epsilon} \geq \frac{4}{q} \ln b \] Setting $t = b^{\epsilon}$ this is equivalent to 
\[ t \geq \frac{4}{q \epsilon} \ln t \] which, by Lemma \ref{betabound}, occurs if 
\[ t \geq \frac{12}{q\epsilon} \ln \frac{4}{q\epsilon} \] or 
\[ b \geq \left[ \frac{12}{q\epsilon} \ln \frac{4}{q\epsilon} \right]^{1/\epsilon}.\]
\end{proof}

\smallskip

For simplicity of presentation, we will henceforth adopt the notation \[ \alpha(q,\epsilon) =  \left[ \frac{12}{q \epsilon} \ln \left( \frac{4}{q \epsilon} \right) \right]^{1/\epsilon}. \] 

\smallskip

\smallskip With all these lemmas in place, we now turn to the main goal in this subsection, which is to analyze how a sequence satisfying
Eq. (\ref{exampledecay}) decays with time. Our next lemma does this for the special choice of $t_k=k$. The proof of this lemma relies on 
all the results derived previously in this subsection.

\smallskip

\begin{lemma} \label{decay} Suppose $a(k)$ is a nonnegative sequence satisfying \[ a({k+1}) \leq \left( 1 - \frac{q}{(k+1)^{1-\epsilon}} \right) a(k) + \frac{d}{k^{2 - 2 \epsilon}},\] where $q \in (0,1]$, $ \epsilon \in (0,1)$,  and $d$
  is nonnegative.  
Then for \[ k \geq \alpha(q,\epsilon) \] we have \[ a(k) \leq \frac{25d}{q} \frac{\ln k}{ k^{1 - \epsilon}}  + a(1) e^{-q(k^\epsilon - 2)/\epsilon}. \]
\label{decay0}
\end{lemma}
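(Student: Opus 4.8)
The plan is to treat the hypothesis as a linear first-order recursion and unroll it. Iterating from index $1$ up to $k$ gives the explicit bound
\[ a(k) \le \Phi_q(2,k+1)\, a(1) + d\sum_{j=1}^{k-1} \Phi_q(j+2,k+1)\, \frac{1}{j^{2-2\epsilon}}, \]
where $\Phi_q$ is the product from Lemma \ref{phibound}; here I have used that $\prod_{i=1}^{k-1}(1-q/(i+1)^{1-\epsilon}) = \Phi_q(2,k+1)$ and that the tail product multiplying the forcing term $d/j^{2-2\epsilon}$ is $\prod_{i=j+1}^{k-1}(1-q/(i+1)^{1-\epsilon}) = \Phi_q(j+2,k+1)$. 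The two summands will be matched, respectively, against the two terms in the claimed bound.

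The homogeneous term is immediate. By Lemma \ref{phibound}, $\Phi_q(2,k+1) \le e^{-q((k+1)^\epsilon - 2^\epsilon)/\epsilon}$, and since $(k+1)^\epsilon \ge k^\epsilon$ and $2^\epsilon \le 2$ (as $\epsilon \le 1$), this is at most $e^{-q(k^\epsilon - 2)/\epsilon}$. Hence $\Phi_q(2,k+1)\,a(1)$ is already dominated by the second term $a(1)e^{-q(k^\epsilon-2)/\epsilon}$ of the claim, with no slack needed.

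The crux is the forcing sum, and the decisive idea is a telescoping identity. Since $\Phi_q(j+1,k+1) = (1-q/(j+1)^{1-\epsilon})\Phi_q(j+2,k+1)$, we have $\frac{q}{(j+1)^{1-\epsilon}}\Phi_q(j+2,k+1) = \Phi_q(j+2,k+1) - \Phi_q(j+1,k+1)$, so summing over $j$ telescopes to
\[ \sum_{j=1}^{k-1}\frac{1}{(j+1)^{1-\epsilon}}\Phi_q(j+2,k+1) = \frac{1-\Phi_q(2,k+1)}{q} \le \frac1q. \]
I would then split the forcing sum at $j+2 \le (k+1)/2$ versus $j+2 > (k+1)/2$. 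On the far range $j+2\le(k+1)/2$, Lemma \ref{halflemma} (applicable since $k\ge\alpha(q,\epsilon)$ gives $k+1\ge\alpha(q,\epsilon)$) shows $(k+1)/2 \le (k+1) - \frac2q (k+1)^{1-\epsilon}\ln(k+1)$, so Lemma \ref{cubedecay} yields $\Phi_q(j+2,k+1)\le 1/(k+1)^2$; the resulting contribution is at most $\frac{d}{(k+1)^2}\sum_{j=1}^k j^{-(2-2\epsilon)}$, which (bounding $j^{-(2-2\epsilon)} = j^{-1}j^{2\epsilon-1}\le j^{-1}\max(1,k^{2\epsilon-1})$ to avoid a blow-up as $\epsilon\to1/2$) is $O\big((\ln k)\,k^{-(1-\epsilon)}\,k^{-1}\big)$ and so negligible against $(\ln k)/k^{1-\epsilon}$. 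On the near range $j+2>(k+1)/2$, every $j$ obeys $j\ge k/3$ (using $k\ge\alpha(q,\epsilon)>9$), so writing $j^{-(2-2\epsilon)} = (j+1)^{-(1-\epsilon)}(1+1/j)^{1-\epsilon}j^{-(1-\epsilon)} \le 6\,k^{-(1-\epsilon)}(j+1)^{-(1-\epsilon)}$ and invoking the telescoping bound gives a contribution at most $\frac{6d}{q}\,k^{-(1-\epsilon)}$. Collecting both ranges and using $\ln k\ge1$ gives the forcing sum $\le \frac{9d}{q}\frac{\ln k}{k^{1-\epsilon}}$, as required.

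The genuinely delicate point is not any single estimate but the bookkeeping: one must check that the threshold $\alpha(q,\epsilon)$ is large enough to simultaneously license Lemma \ref{cubedecay} on the far range, to render the far contribution and the stray $(1+1/j)^{1-\epsilon}$ factors harmless, and to keep the accumulated constant at or below $9$. I expect the telescoping identity to be the key step, since it is exactly what keeps the constant clean; bounding the near range term-by-term instead (with $\Phi_q\le1$ and counting the $O\big(q^{-1}k^{1-\epsilon}\ln k\big)$ surviving terms from Lemma \ref{cubedecay}) produces a far larger constant and would not obviously reach the stated $9d/q$.
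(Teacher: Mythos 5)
Your proof is correct. It shares the paper's outer skeleton---unroll the recursion, kill the initial-condition term with Lemma \ref{phibound}, and use Lemma \ref{halflemma} plus Lemma \ref{cubedecay} to make the old terms negligible---but the treatment of the dominant recent-term range, which is the heart of the lemma, is genuinely different. The paper splits the sum exactly at the threshold where Lemma \ref{cubedecay} stops applying, so its ``near'' range consists of only $\lfloor (2/q)k^{1-\epsilon}\ln k\rfloor$ terms; on that narrow strip every index is at least $k/2$ by Lemma \ref{halflemma}, and the crude bound (number of terms)$\times$(largest term)$\times(\Phi_q\le 1)$ gives $8d\ln k/(qk^{1-\epsilon})$, with the far range contributing $d/k$; that is where the constant $9$ comes from. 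You instead split at $k/2$ and tame the much wider near range with the telescoping identity $\frac{q}{(j+1)^{1-\epsilon}}\Phi_q(j+2,k+1)=\Phi_q(j+2,k+1)-\Phi_q(j+1,k+1)$, which is a different key mechanism. Two remarks on the comparison. First, your closing parenthetical slightly misjudges the counting route: it does reach $9d/q$---it is precisely the paper's proof---because the strip being counted has width $O(q^{-1}k^{1-\epsilon}\ln k)$, not width $k/2$; term-by-term counting only blows up on your wider range. Second, your route actually buys something the paper's does not: your near-range bound $6d/(qk^{1-\epsilon})$ carries no logarithm, and your far-range bound is at most $d(1+\ln k)\max(1,k^{2\epsilon-1})/(k+1)^2 \le d/(qk^{1-\epsilon})$, so your argument proves the strictly stronger estimate $a(k)\le \frac{7d}{q k^{1-\epsilon}}+a(1)e^{-q(k^{\epsilon}-2)/\epsilon}$. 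In other words, the $\ln k$ in the statement of Lemma \ref{decay0} is an artifact of the counting argument, not intrinsic to the recursion, and your telescoping identity exposes that.
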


\smallskip

\begin{proof} Let us adopt the shorthand $\phi(k) = d/k^{2-2\epsilon}$. We have that 
\[ a(k) \leq \phi(k-1) + \phi(k-2) \Phi_q(k,k+1) + \phi(k-3) \Phi_q(k-1,k+1) + \cdots + \phi(1) \Phi_q(3,k+1) + a(1) \Phi_q(2,k+1). \] We will break this sum up into
four pieces:
\begin{eqnarray*} a(k) & \leq &  \sum_{j=1}^{\lfloor (2/q) k^{1-\epsilon} \ln k \rfloor} \phi(k-j) \Phi_q(k+2-j,k+1)  +  \sum_{j=\lfloor (2/q) k^{1-\epsilon} \ln k \rfloor + 1}^{\lceil (2/q) k^{1-\epsilon} \ln k \rceil + 1} \phi(k-j) \Phi_q(k+2-j,k+1) \\ && + \sum_{j= \lceil (2/q) k^{1-\epsilon} \ln k \rceil + 2}^{k-1}  \phi(k-j) \Phi_q(k+2-j,k+1)  + a(1) \Phi_q(2,k+1)   \end{eqnarray*} We will bound each piece separately. 

The first piece can be bounded by using Lemma \ref{halflemma} to argue that each of the terms $\phi(k-j)$ is at most $d(1/(k/2))^{2 - 2 \epsilon}$, the quantity $\Phi_q(k-j+2,k+1)$ is upper bounded by $1$, and there are at most $(2/q) k^{1-\epsilon} \ln k $ terms in the sum. Consequently, 
\[ \sum_{j=1}^{\lfloor (2/q) k^{1-\epsilon} \ln k \rfloor} \phi(k-j) \Phi_q(k-j+1,k)  \leq \left( \frac{2}{q} k^{1-\epsilon} \ln k   \right) \frac{d}{(k/2)^{2-2\epsilon}}  \leq \frac{8d \ln k}{q k^{1-\epsilon}}. \] In the second piece, there are at most two terms, each of the $\Phi_q(\cdot, \cdot)$ is at most one, and we use Lemma \ref{halflemma} again to argue that the piece is upper bounded by  
\[ \frac{d}{(k/2 - 1)^{2-2\epsilon}} + \frac{d}{(k/2 - 2)^{2 - 2 \epsilon}} \leq \frac{4d}{(k/2)^{2 - 2 \epsilon}} \leq \frac{16d}{k^{2-2 \epsilon}} \] where the penultimate inequality is true because $k \geq \alpha(q,\epsilon)$ implies $k \geq 12 \ln 4 \geq 16$. We bound the third piece by arguing that it is at most 
\[ \sum_{j= \lceil (2/q) k^{1-\epsilon} \ln k \rceil+2}^{k-1}  \phi(k-j) \Phi_q(k-(j-2)),k)  \] and then arguing that all the terms $\phi(k-j)$ are bounded above by $d$, whereas the sum of $\Phi_q(k-(j-2),k)$ over 
that range is at most $1/k$ due to Lemma \ref{cubedecay}. Thus
\[ \sum_{j= \lceil (2/q) k^{1-\epsilon} \ln k \rceil+2}^{k-1}  \phi(k-j) \Phi_q(k+2-j,k) \leq \frac{d}{k}. \] Finally, the last term is bounded straightforwardly by Lemma \ref{phibound}. Putting these 
three bounds together gives the statement of the current lemma. 
\end{proof}

\smallskip Finally, we turn to the main result of this subsection, which is the extension of the previous corollary to the case of general sequences 
$t_k$. The following result is the only one which we will have occasion to use later, and its proof proceeds by an appeal to Lemma \ref{decay0}.

\smallskip

\begin{corollary} Suppose $a(t_k)$ is a nonnegative sequence satisfying \[ a(t_{k+1}) \leq \left( 1 - \frac{q}{t_{k+1}^{1-\epsilon}} \right) a(t_k) + \frac{d}{t_k^{2 - 2 \epsilon}},\] where $q \in (0,1]$, $d$ is nonnegative, $\epsilon \in (0,1)$ and $t_k$ is some increasing integer sequence satisfying $t_1=1$ and 
\[ |t_{k+1} - t_k | < T ~~~~ \mbox{ for all nonnegative } k, \] where $T$ is some positive integer.  
Then if 
\[ k \geq \left[ \frac{12T}{q \epsilon} \ln \left( \frac{4T}{q \epsilon} \right) \right]^{1/\epsilon} ,\] we will have 
\[ a(t_k) \leq \frac{25dT \ln k}{q k^{1 -  \epsilon}}  + a(1) e^{-q(k^\epsilon - 2)/(T\epsilon)} .\]\label{effdecay}
\end{corollary}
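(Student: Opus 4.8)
The plan is to reduce the statement to the already-established Lemma~\ref{decay0}, which handles the special case $t_k=k$, by absorbing the effect of the sparse sampling times into the constants $q$ and $d$ and invoking the lemma as a black box. The first step is to pin down how fast the sequence $t_k$ grows. Since the $t_k$ are strictly increasing integers with $t_1=1$ and consecutive gaps at most $T-1$, a one-line induction gives the two-sided bound $k \le t_k < kT$ for every $k$: monotonicity forces $t_k \ge k$, while $t_{k+1} \le t_k + (T-1)$ propagates $t_k < kT$.

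With this in hand, I would set $b(k):=a(t_k)$ and convert the given $t_k$-indexed recursion into a clean recursion indexed by $k$. In the contraction factor I replace $t_{k+1}$ by its upper bound, using $t_{k+1} < (k+1)T$ to write $1 - q/t_{k+1}^{1-\epsilon} \le 1 - q'/(k+1)^{1-\epsilon}$ with $q' := q/T^{1-\epsilon}$; this is a legitimate upper bound on the recursion because $b(k)\ge 0$. In the additive term I instead use the lower bound $t_k \ge k$, so that $d/t_k^{2-2\epsilon} \le d/k^{2-2\epsilon}$. The outcome is precisely the hypothesis of Lemma~\ref{decay0} for the sequence $b$ with parameters $q'$, $d$, $\epsilon$ and initial value $b(1)=a(1)$. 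I must also check $q'\in(0,1]$, which holds since $T\ge 1$ forces $T^{1-\epsilon}\ge 1$ and hence $q'\le q \le 1$.

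Before invoking the lemma I verify its index threshold is met. Lemma~\ref{decay0} requires $k\ge\alpha(q',\epsilon)$, and substituting $1/q' = T^{1-\epsilon}/q$ gives $\alpha(q',\epsilon) = [\,(12T^{1-\epsilon}/(q\epsilon))\ln(4T^{1-\epsilon}/(q\epsilon))\,]^{1/\epsilon}$. Since $1\le T^{1-\epsilon}\le T$, this is no larger than the threshold $[\,(12T/(q\epsilon))\ln(4T/(q\epsilon))\,]^{1/\epsilon}$ assumed in the corollary, so the hypothesis on $k$ stated in the corollary already suffices to apply the lemma. Applying Lemma~\ref{decay0} and re-substituting $q'=q/T^{1-\epsilon}$ then yields a bound of the claimed shape: a polynomially decaying term of order $d\,T^{1-\epsilon}/(q\,k^{1-\epsilon})$ together with an exponential term governed by $\exp(-q(k^\epsilon-2)/(T^{1-\epsilon}\epsilon))$.

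The routine but delicate part, and the step I expect to be the main obstacle, is the final tidying of these constants into the exact form stated. Coarsening $T^{1-\epsilon}\le T$ in both terms and using the lower bound on $k$ to pass from the exponent $k^\epsilon-2$ to $k^\epsilon-1$ is straightforward; the genuinely delicate point is the logarithmic factor $\ln k$ that the proof of Lemma~\ref{decay0} produces from its ``recent-terms'' block. This factor must be carried through and reconciled against the $T$-factor in the target bound for the range of $k$ under consideration, and I would keep explicit control of it throughout the reduction rather than discarding it, since it is the one quantity whose domination by the stated right-hand side is not immediate and therefore deserves the most careful bookkeeping.
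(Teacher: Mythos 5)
Your reduction---setting $b(k)=a(t_k)$, using $k \le t_k \le kT$ to replace the contraction factor by $1-q'/(k+1)^{1-\epsilon}$ with $q'=q/T^{1-\epsilon}$ and the forcing term by $d/k^{2-2\epsilon}$, and then invoking Lemma \ref{decay0}---is exactly the paper's own proof of Corollary \ref{effdecay}, and your verification that the corollary's stated threshold dominates $\alpha(q',\epsilon)$ via $T^{1-\epsilon}\le T$ is the intended bookkeeping.

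The one point you single out as the main obstacle, however, is a genuine one, and you should know that the paper does not actually resolve it: after applying Lemma \ref{decay0} the paper simply writes $b(k) \le \frac{9dT}{qk^{1-\epsilon}} + b(1)e^{-q(k^\epsilon-2)/(\epsilon T)}$, i.e., it silently replaces the factor $\frac{9d}{q'}\ln k = \frac{9dT^{1-\epsilon}}{q}\ln k$ delivered by the lemma with $\frac{9dT}{q}$. That replacement is legitimate only when $T^{1-\epsilon}\ln k \le T$, i.e., $k \le e^{T^{\epsilon}}$; since the hypothesis only bounds $k$ from below, the corollary as stated does not literally follow from Lemma \ref{decay0} for all admissible $k$. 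A faithful version of this argument either carries the $\ln k$ into the conclusion (and hence into Theorem \ref{mainthm}), or removes the logarithm at its source: it is an artifact of the block decomposition used to prove Lemma \ref{decay0}, and a direct induction on the recursion (propagating a bound of the form $C/k^{1-\epsilon}$) yields the decay $O\bigl(d/(q'k^{1-\epsilon})\bigr)$ with no logarithmic factor, which is what the corollary asserts. So your instinct to keep explicit control of $\ln k$ rather than discard it is correct, and what you have identified is a gap in the paper's own derivation rather than a defect peculiar to your reduction. (A second, minor discrepancy: the lemma gives the exponent $k^{\epsilon}-2$, while the corollary states $k^{\epsilon}-1$; the main theorem reverts to $-2$, so this is evidently a typo.)
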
 

\begin{proof} Define $b(k) = a(t_k)$. Then 
\[ b(k+1) \leq \left( 1 - \frac{q}{t_{k+1}^{1-\epsilon}} \right) b(k) + \frac{d}{t_k^{2 - 2 \epsilon}}. \] Since $  k \leq t_k \leq kT$ , we have that 
\[ b(k+1) \leq \left( 1 - \frac{q/T^{1-\epsilon}}{(k+1)^{1-\epsilon}} \right) b(k) + \frac{d}{k^{2 - 2 \epsilon}}. \] Applying Lemma \ref{decay0}, we get
that for 
\[ k \geq  \left[ \frac{12T}{q \epsilon} \ln \left( \frac{4T}{q\epsilon} \right) \right]^{1/\epsilon},\] we have 
\begin{equation} \label{tkeq} b(k) \leq \frac{25dT \ln k }{qk^{1-\epsilon}} + b(1) e^{-q(k^{\epsilon}-2)/(\epsilon T)}. \end{equation} 
The corollary now follows since $a(t_k)=b(k)$. 
\end{proof}

\subsection{Analysis of the learning protocol\label{subsec:sieve}}

With all the results of the previous subsections in place, we can now turn to the analysis of our protocol. We do not begin the actual proof of Theorem \ref{mainthm} in this subsection, but rather we derive some bounds on the decrease of $Z(t)$ at each step. It is in the next Section \ref{sec:proof}, we will make use of these bounds to prove Theorem \ref{mainthm}. 

 For the remainder of Section \ref{subsec:sieve},
we will assume that $l=1$, i.e,  $\mu$ and all $v_i(t)$ belong to $\R$. We will then define $v(t)$ to be the vector that stacks up $v_1(t), \ldots, v_n(t)$. 

The following proposition describes a convenient way to write Eq (\ref{nonmeasuringupdate}). We omit the proof (which is obvious). 

\smallskip

\begin{proposition} \label{remark:eqrewrite} We can rewrite Eq. (\ref{nonmeasuringupdate}) and Eq. (\ref{measuringupdate}) as follows:
\begin{eqnarray*} y(t+1) & = &  A(t) v(t) + b(t)  \\
q(t+1) & = & (1 - \Delta(t)) v(t) + \Delta(t) y(t+1) \\
v(t+1) & = & q(t+1) + \Delta(t) r(t)\aor{ + \Delta(t) c(t)},
\end{eqnarray*} where: 
\begin{enumerate}
\item If $i \neq j$ and $i,j$ are neighbors in $G(t)$, \[ a_{ij}(t) = \frac{1}{4 \max(d_i(t), d_j(t))}. \] \ao{However, if $i \neq j$ are not neighbors in 
$G(t)$, then $a_{ij}(t)=0$.} As a consequence, 
$A(t)$ is a symmetric matrix. 
\item If node $i$ does not have a measurement of $\mu$ at time $t$, then \[ a_{ii}(t) = 1 - \frac{1}{4} \sum_{j \in N_i(t), ~~ j \neq i} 
\frac{1}{\max(d_i(t),d_j(t))}. \] On the other hand, if node $i$ does have a measurement of $\mu$ at time $t$, 
\[ a_{ii}(t) =  \frac{3}{4} - \frac{1}{4} \sum_{j \in N_i(t), ~~ j \neq i} 
\frac{1}{\max(d_i(t),d_j(t))} . \] Thus $A(t)$ is a diagonally dominant matrix and its graph is merely the 
intercommunication graph at time $t$: $G(A(t)) = G(t)$. Moreover, if no node has a measurement at time $t$, $A(t)$ is stochastic.
\item If node $i$ does not have a measurement of $\mu$ at time $t$, then $b_i(t)=0$. If node $i$ does have a measurement of $\mu$
at time $t$, then $b_i(t) = (1/4) \mu$. 
\item If node $i$ has a measurement of $\mu$ at time $t$, $r_i(t)$ is a random variable with mean zero and variance $\sigma^2/16$.  Else, 
$r_i(t)=0$. Each $r_i(t)$ is independent of all $v(t)$ \aor{and all other $r_j(t)$. Similarly, $c_i(t)$ is the random variable \[ c_i(t) = \frac{1}{4} \sum_{j \in N(i)} \frac{w_{ij}(t)}{\max( d_i(t), d_j(t) )}. \] Each $c_i(t)$ has mean zero, and the vectors $c(t), c(t')$ are independent whenever $t \neq t'$. Moreover, $c(t)$ and $r(t')$ are 
independent for all $t,t'$. Finally, $E[c_i^2(t)] \leq (\sigma')^2/16$.} 
\end{enumerate} 

Putting it all together, we may write our update as 
\[ v(t+1) = (1- \Delta(t)) v(t)  + \Delta(t) A(t) v(t) + \Delta(t) b(t) + \Delta(t) r(t) + \Delta(t) c(t). \]
\end{proposition}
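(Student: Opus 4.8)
The plan is to substitute the definitions of the noisy quantities directly into the two update rules (\ref{nonmeasuringupdate}) and (\ref{measuringupdate}) and then separate the resulting expression for $v_i(t+1)$ into a deterministic part depending only on $v(t)$ and $\mu$, together with two mean-zero noise terms. Recalling that $o_{ij}(t) = v_j(t) - v_i(t) + w_{ij}(t)$ and $\mu_i(t) = \mu + w_i(t)$, I would define $c_i(t) = \frac14 \sum_{j \in N_i(t)} w_{ij}(t)/\max(d_i(t),d_j(t))$ to collect all the communication noise and, in the measuring case, $r_i(t) = \frac14 w_i(t)$ to collect the measurement noise (with $r_i(t)=0$ when node $i$ does not measure). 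After pulling these terms out, the task reduces to showing that the remaining deterministic part equals $(1-\Delta(t)) v_i(t) + \Delta(t) [A(t) v(t)]_i + \Delta(t) b_i(t)$ with the matrix and offset specified in items 1--3.

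The central algebraic step is to expand $[A(t) v(t)]_i = a_{ii}(t) v_i(t) + \sum_{j \neq i} a_{ij}(t) v_j(t)$ and check that it reproduces the neighbor-averaging term. Using $a_{ij}(t) = 1/(4\max(d_i(t),d_j(t)))$ for neighbors and the stated diagonal entry, in the non-measuring case one gets $[A(t) v(t)]_i = v_i(t) + \frac14 \sum_{j \in N_i(t)} (v_j(t)-v_i(t))/\max(d_i(t),d_j(t))$, because the $-\frac14 \sum_{j \in N_i(t)} v_i(t)/\max(d_i(t),d_j(t))$ contribution hidden in $a_{ii}(t)$ cancels against the $v_i(t)$ appearing through the off-diagonal terms. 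Substituting into $(1-\Delta(t))v_i(t) + \Delta(t)[A(t)v(t)]_i$ then collapses to exactly $v_i(t) + \frac{\Delta(t)}{4} \sum_{j \in N_i(t)} (v_j(t)-v_i(t))/\max(d_i(t),d_j(t))$, matching (\ref{nonmeasuringupdate}). In the measuring case the diagonal entry carries an extra $-\frac14$, which produces precisely the $-\frac{\Delta(t)}{4} v_i(t)$ term of (\ref{measuringupdate}), while the $\frac{\Delta(t)}{4}\mu$ term is accounted for by $b_i(t) = \mu/4$. Carrying out this bookkeeping in both cases completes the verification of the composite update.

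It then remains to confirm the claimed properties of $A(t)$, each of which follows from elementary degree bounds. Symmetry is immediate since $\max(d_i(t),d_j(t))$ is symmetric in $i,j$ and off-diagonal entries vanish off the edge set, which also gives $G(A(t)) = G(t)$. For diagonal dominance and nonnegativity, I would bound the off-diagonal row sum using $\max(d_i(t),d_j(t)) \geq d_i(t)$ and $|N_i(t)| = d_i(t)$ to get $\frac14 \sum_{j \in N_i(t)} 1/\max(d_i(t),d_j(t)) \leq \frac14$; hence the diagonal entry is at least $3/4$ (non-measuring) or $1/2$ (measuring) and dominates the off-diagonal sum. When no node measures, every diagonal entry equals one minus its off-diagonal row sum, so the row sums are exactly one and $A(t)$ is stochastic.

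Finally the distributional claims of item 4 are verified directly from the independence assumptions on the $w_i(t)$ and $w_{ij}(t)$: $r_i(t) = \frac14 w_i(t)$ has mean zero and variance $\sigma^2/16$, and since the measurement noise at time $t$ is independent of everything generating $v(t)$ (and of the other agents' noise), $r_i(t)$ is independent of $v(t)$ and of the $r_j(t)$. For $c_i(t)$, independence across times and independence from $r(t')$ again come from the independence of the underlying $w$'s, and the only computation worth flagging is the variance bound $E[c_i^2(t)] = \frac{1}{16} \sum_{j \in N_i(t)} (\sigma')^2/\max(d_i(t),d_j(t))^2 \leq (\sigma')^2/16$, where the inequality uses $\max(d_i(t),d_j(t)) \geq d_i(t)$ together with $|N_i(t)| = d_i(t)$ to give $\sum_{j \in N_i(t)} 1/\max(d_i(t),d_j(t))^2 \leq d_i(t) \cdot 1/d_i(t)^2 \leq 1$. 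I do not expect a genuinely hard step here — as the authors note, the statement is essentially mechanical — so the only points to be careful about are the cancellation of the $v_i(t)$ terms in the diagonal expansion and the consistent use of the degree inequality $\max(d_i(t),d_j(t)) \geq d_i(t)$ in both the stochasticity and the variance bounds.
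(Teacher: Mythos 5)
Your verification is correct, and it is precisely the mechanical check the paper has in mind: the paper omits the proof of this proposition as ``obvious,'' meaning exactly the direct substitution of $o_{ij}(t)$ and $\mu_i(t)$ into the update rules, the cancellation of the $v_i(t)$ terms in the diagonal expansion, and the elementary degree bound $\max(d_i(t),d_j(t)) \geq d_i(t)$ that you carry out. All steps, including the variance bound $E[c_i^2(t)] \leq (\sigma')^2/16$ and the independence claims, check out.
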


Let us use $S(t)$ for the set of agents who have a measurement at time $t$. We use this notation in the   next lemma, which 
bounds the decrease in $Z(\aor{t})$ from time $t$ to $t+1$.

\smallskip

\begin{lemma} \label{lemma:measurementdecrease} If $\Delta(t) \in (0,1)$ then 
\begin{eqnarray*}
E[Z(\aor{t+1}) ~ {\bf | }~ v(t), \aor{v(t-1), \ldots, v(1)}] & \leq & Z(\aor{t}) - \frac{\Delta(t)}{\ao{8}} \sum_{(k,l) \in E(t)} \frac{(v_k(t) - v_l(t))^2}{\max (d_k(t), d_l(t))}  \\ && ~~~~~-\frac{\Delta(t)}{4} \sum_{k \in S(t)} (v_k(t) - \mu)^2 +   \frac{\Delta(t)^2}{16} \aor{ \left( M \sigma^2 + n (\sigma')^2 \right)} 
\end{eqnarray*} Recall that $E(t)$ is the set of undirected edges in the graph at time $t$, so every pair of neighbors $(i,j)$ appears once in the above sum. Moreover, if $S(t)$ is nonempty, then 
\[ E[Z(\aor{t+1}) ~ {\bf | }~ v(t), \aor{v(t-1), \ldots, v(1)}] \leq \left( 1 - \frac{1}{8} \Delta(t) \kappa \left[ G(t) \right] \right) Z(\aor{t}) +  \frac{\Delta(t)^2}{16} \aor{ \left( M\sigma^2 + n(\sigma')^2 \right)}. \]
\end{lemma}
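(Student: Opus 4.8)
The plan is to pass to the centered vector $x = v(t) - \mu\1$ (recall $l=1$ throughout this subsection, so $\mu\1 \in \R^n$), so that $Z(t) = \|x\|_2^2$, and to write the one-step update in the compact affine form of Proposition \ref{remark:eqrewrite}. Setting $W(t) = (1-\Delta(t))I + \Delta(t)A(t)$, the update reads $v(t+1) = W(t)v(t) + \Delta(t)b(t) + \Delta(t)r(t) + \Delta(t)c(t)$. The first key step is the fixed-point identity $W(t)(\mu\1) + \Delta(t)b(t) = \mu\1$: indeed the row sums of $A(t)$ are $1$ on non-measuring rows and $3/4$ on measuring rows, and on the latter the contribution $\Delta(t)b(t) = \Delta(t)\mu/4$ exactly compensates. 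Subtracting this identity gives $v(t+1) - \mu\1 = W(t)x + \Delta(t)r(t) + \Delta(t)c(t)$. Conditioning on $v(1),\ldots,v(t)$ (which determines $W(t)$, since the graph and measurement pattern are exogenous) and using that $r(t),c(t)$ are mean zero, independent of the past, and mutually independent, the cross terms vanish and I obtain $E[Z(t+1)\mid v(t),\ldots,v(1)] = \|W(t)x\|_2^2 + \Delta(t)^2\big(E\|r(t)\|_2^2 + E\|c(t)\|_2^2\big)$. The variance bounds in Proposition \ref{remark:eqrewrite}(4) — variance $\sigma^2/16$ on at most $M$ measuring coordinates of $r(t)$, and $E[c_i^2(t)]\le(\sigma')^2/16$ on all $n$ coordinates of $c(t)$ — bound the second term by $\frac{\Delta(t)^2}{16}(M\sigma^2 + n(\sigma')^2)$, which is precisely the additive noise term in both displayed inequalities.

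It remains to bound $\|W(t)x\|_2^2$ by $Z(t)$ minus the two decrease terms. Since $\Delta(t)\in(0,1)$, convexity of $\|\cdot\|_2^2$ gives $\|W(t)x\|_2^2 \le (1-\Delta(t))\|x\|_2^2 + \Delta(t)\|A(t)x\|_2^2$, and $A(t)$ is symmetric, so Corollary \ref{sievebound} expands $\|A(t)x\|_2^2$ exactly. Combining, the $\|x\|_2^2$ terms collapse and I get $\|W(t)x\|_2^2 \le \|x\|_2^2 - \Delta(t)\sum_j(1-r_j(A(t)))x_j^2 - \Delta(t)\sum_{k<l}[A(t)^2]_{kl}(x_k-x_l)^2$. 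The remaining work is to lower-bound these coefficients from the explicit entries of $A(t)$. For an edge $(k,l)$, keeping the two length-two paths through the endpoints gives $[A(t)^2]_{kl} \ge (a_{kk}+a_{ll})a_{kl} \ge a_{kl} = \frac{1}{4\max(d_k,d_l)}$, where I use that every diagonal entry satisfies $a_{ii}\ge 1/2$; this produces the edge-decrease term. For the measurement term, writing $r_j(A) = (A(A\1))_j$ and using that $A\1$ equals $1$ off $S(t)$ and $3/4$ on $S(t)$ shows $1 - r_j(A) \ge 1/4$ for $j\in S(t)$ and $\ge 0$ otherwise, which produces $\frac{\Delta(t)}{4}\sum_{k\in S(t)}x_k^2$. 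Re-expressing $x$ in terms of $v$ ($x_k = v_k(t)-\mu$ and $x_k - x_l = v_k(t)-v_l(t)$) yields the first displayed inequality; the stated coefficient $1/8$ on the edge term is weaker than the $1/4$ just obtained, so it holds a fortiori.

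For the second inequality, assume $S(t)\ne\emptyset$ and fix any $m\in S(t)$. The two decrease terms are, up to constants, exactly the quantity minimized in the definition of the sieve constant of the Metropolis matrix of $G(t)$, evaluated at the index $m$. Using $\sum_{k\in S(t)}x_k^2 \ge x_m^2$ to supply the single squared coordinate $x_m^2$ demanded by that definition, and reconciling the factor of two between the ordered pair-sum $\sum_{k\ne l}$ appearing in $\kappa$ and the unordered edge-sum appearing in the decomposition, the combined decrease is at least $\frac{\Delta(t)}{8}\kappa(G(t))\|x\|_2^2 = \frac{\Delta(t)}{8}\kappa(G(t))Z(t)$. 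Substituting into the bound from the previous paragraph gives the claimed contraction $\big(1 - \frac{1}{8}\Delta(t)\kappa[G(t)]\big)Z(t)$ plus the noise term.

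I expect the main obstacle to be the bookkeeping in the middle paragraph: obtaining clean lower bounds on the entries of $A(t)^2$ and on its row sums $r_j(A(t))$ directly from the Metropolis weights and the measurement pattern, while keeping the constants sharp enough — in particular retaining both endpoint paths in $[A(t)^2]_{kl}$ — so that, together with the factor-of-two reconciliation against the definition of $\kappa$, the contraction constant comes out as exactly $1/8$ rather than something smaller. The reduction in the first paragraph and the sieve-constant comparison in the third are conceptually routine once the fixed-point identity and the independence structure of $r(t),c(t)$ are in hand.
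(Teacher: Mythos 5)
Your proposal is correct and follows essentially the same route as the paper's proof: the fixed-point identity $\mu\1 = A(t)\mu\1 + b(t)$, convexity of the squared norm to handle the $(1-\Delta(t))I + \Delta(t)A(t)$ averaging, the exact expansion of Corollary \ref{sievebound}, diagonal dominance to lower-bound the entries and row-sum deficits of $A^2(t)$, and independence of the zero-mean noises to isolate the additive $\frac{\Delta(t)^2}{16}(M\sigma^2+n(\sigma')^2)$ term. One detail in your version is actually sharper than the paper's: by retaining both length-two paths you get $[A^2(t)]_{kl} \ge (a_{kk}+a_{ll})a_{kl} \ge \frac{1}{4\max(d_k,d_l)}$, whereas the paper keeps only $a_{kk}a_{kl} \ge \frac{1}{8\max(d_k,d_l)}$; under the literal ordered-pair reading of the definition of $\kappa$ (where each edge is counted twice in $\sum_{k\neq l}$), the paper's weaker constant yields only a $\frac{1}{16}\Delta(t)\kappa$ contraction, and it is precisely your factor-of-two slack that makes the stated $\frac{1}{8}\Delta(t)\kappa[G(t)]$ rigorous.
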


\begin{proof} Observe that, for any $t$, the vector $\mu \1$ satisfies \[ \mu \1 = A(t) \mu \1 + b(t),\] and therefore, 
\begin{equation} \label{yminusmu} y(t+1) - \mu \1  =   A(t) (v(t) - \mu \1).
\end{equation}  We now apply Corollary \ref{sievebound} which involves the entries and row-sums of the matrix $A^2(t)$ which we lower-bound as follows. Because $A(t)$ is diagonally dominant \ao{and nonnegative}, we have that if $(k,l) \in E(t)$ then $$ [A^2(t)]_{kl} \geq [A(t)]_{kk} [A(t)]_{kl} \geq \frac{1}{2} \frac{1}{4 \max( d_k(t), d_l(t))} \geq \frac{1}{8 \max (d_k(t), d_l(t))}.$$ Moreover, if $k$ has a measurement of $\mu$ then the row
sum of the $k$'th row of $A$ \ao{equals} $3/4$, which implies that the $k$'th row sum of $A^2$ is \ao{at most} $3/4$.  Consequently,  Corollary \ref{sievebound} implies
\begin{small} \begin{equation} ||y(t+1) - \mu \1||_2^2   \leq   Z(\aor{t})  - \frac{1}{8} \sum_{(k,l) \in E(t)} \frac{(v_k(t)-v_l(t))^2}{\max (d_k(t), d_l(t))} - \frac{1}{4  } \sum_{k \in S(t)} (v_k(t) - \mu)^2.  \label{eq:subtractbound} 
\end{equation} \end{small} Next, \ao{since $\Delta(t) \in (0,1)$ we can appeal to the convexity of the squared two-norm to obtain}
\begin{small} \begin{eqnarray*}   ||q(t+1) - \mu \1||_2^2   & \leq &   Z(\aor{t})  - \frac{\Delta(t)}{8} \sum_{(k,l) \in E(t)} \frac{(v_k(t)-v_l(t))^2}{\max (d_k(t), d_l(t))} - \frac{\Delta(t)}{4} \sum_{k \in S(t)} (v_k(t) - \mu)^2. 
\end{eqnarray*} \end{small} Since $E[r(t)]= 0, \aor{E[b(t)]=0}$ and $\aor{E[||r(t) + c(t)||_2^2] \leq  (M\sigma^2 + n(\sigma')^2)/16}$ independently of \aor{all} $v(t)$, this immediately implies the first inequality in the statement of the lemma. The second inequality is then a straightforward consequence of the
definition of the sieve constant.
\end{proof} 

\subsection{Completing the proof\label{sec:proof}} With all the lemmas of the previous subections in place, we finally begin the proof of our main result,
Theorem \ref{mainthm}. Along the way, we will prove the basic convergence result of Proposition \ref{thm:conv}. 

Our first observation in this section is that it suffices to prove it in the case when $l=1$, (i.e., when $\mu$ is a number). Indeed, observe that the update 
equations Eq. (\ref{nonmeasuringupdate}) and (\ref{measuringupdate}) are separable in the entries of the vectors $v_i(t)$. Therefore, if Theorem \ref{mainthm} is proved under the assumption $l=1$, we may apply it to each component to obtain it for the general case. Thus we will therefore be assuming without loss of generality that $l=1$ for the remainder of this paper.

Our first step is to prove the basic convergence result of Proposition \ref{thm:conv}. Our proof strategy is to repeatedly apply Lemma \ref{lemma:measurementdecrease} to bound the the decrease in $Z(t)$ at each stage. This will yield a decrease rate for $Z(t)$ which will imply almost sure convergence to the correct $\mu$.

\bigskip

\begin{proof}[Proof of \aor{Proposition} \ref{thm:conv}]  We first claim that there exists some constant $c>0$ such that if $t_k=k \max(T,2B)$, then
\begin{equation} \label{constantdecay}  E[Z(\aor{t_{k+1}}) ~|~ v(t_k)] \leq (1- c \Delta(t_{k+1})) Z(\aor{t_k}) +  \max(T,2B) \Delta(t_k)^2 \aor{( M \sigma^2 + n (\sigma')^2)}. \end{equation} 
\ao{We postpone the proof of this claim for a few lines while we observe that,} as a consequence of our assumptions on $\Delta(t)$, we have the following three facts: 
\[ \sum_{k=1}^{\infty} c \Delta(t_{k+1}) = +\infty, ~~~~~~~\sum_{k=1}^{\infty}  \max(T,2B) \Delta(t_k)^2 \aor{( M \sigma^2 + n (\sigma')^2)} < \infty \] \[ \lim_{\ao{k} \rightarrow \infty} \frac{ \max(T,2B) \Delta(t_k)^2 \aor{( M \sigma^2 + n (\sigma')^2)}}{c \Delta(t_{k+1})} = 0. \] Moreover, it is true for large enough $k$ that $c \Delta( t_{k+1} ) < 1$. \ao{Now as a consequence of these four facts, Lemma 10 from Chapter 2.2 of \cite{p87}} implies $\lim_{t \rightarrow \infty} Z(\aor{t})=0$ with probability $1$. 

\ao{To conclude the proof, it remains to demonstrate} Eq. (\ref{constantdecay}). \aor{Applying Lemma \ref{lemma:measurementdecrease} at every time $t$ between $t_{k}$ and $t_{k+1}-1$ and iterating expectations, we obtain} \begin{small}
\begin{eqnarray} E[Z(\aor{t_{k+1}}) ~|~ v(t_k)] & \leq &  Z(\aor{t_k}) -   \sum_{m=t_k}^{t_{k+1}-1} \left(  \frac{\Delta(m)}{8} \sum_{(k,l) \in E(m)} \frac{E[(v_k(m)- v_l(m))^2 ~|~ v(t_k)]}{\max (d_k(m), d_l(m))} \nonumber \right. \\ && \left. ~~~~~~~~+ \frac{\Delta(m)}{4  }  \sum_{k \in S(m)} E [(v_k(m)  - \mu)^2 ~|~ v(t_k)]  \right)
 + \Delta^2(t_k) \max(T,2B) \frac{\aor{ M \sigma^2 + n (\sigma')^2}}{16}   \label{decreaseineq}
 \end{eqnarray} \end{small} Note that if $Z(t_k)=0$, then the last equation immediately implies Eq. (\ref{constantdecay}). If $Z(t_k) \neq 0$, then observe that Eq. (\ref{constantdecay}) would follow  from the assertion
\[ \inf ~~ \frac{\sum_{m=t_k}^{t_{k+1}-1} \sum_{(k,l) \in E(m)} E[(v_k(m) - v_l(m))^2 ~|~ v(t_k)] + \sum_{k \in S(m)} E[(v_k(m) - \mu)^2 ~|~ v(t_k)] }{\sum_{i=1}^n (v_i(t_k) - \mu)^2} > 0 \] where the infimum is taken over all vectors $v(t_k)$ such that $v(t_k) \neq \mu \1$ and over all possible sequences 
of undirected communication graphs and measurements between time $t_k$ and $t_{k+1}-1$ satisfying the conditions of uniform connectivity and uniform measurement speed. Now since $E[X^2] \geq E[X]^2$, we have that \begin{eqnarray*} && \inf ~~ \frac{\sum_{m=t_k}^{t_{k+1}-1} \sum_{(k,l) \in E(m)} E[(v_k(m) - v_l(m))^2 ~|~ v(t_k)] + \sum_{k \in S(m)} E[(v_k(m) - \mu)^2 ~|~ v(t_k)] }{\sum_{i=1}^n (v_i(t_k) - \mu)^2} \\ && \geq  \inf ~~ \frac{\sum_{m=t_k}^{t_{k+1}-1} \sum_{(k,l) \in E(m)} E[v_k(m) - v_l(m) ~|~ v(t_k)]^2 + \sum_{k \in S(m)} E[v_k(m) - \mu ~|~ v(t_k)]^2 }{\sum_{i=1}^n (v_i(t_k) - \mu)^2}.  \end{eqnarray*} We will \ao{complete the proof by arguing} that this last infimum is strictly positive.

Let us \ao{define} $z(t) = E[v(t) - \mu \1 ~|~ v(t_k)]$ for $t \geq t_k$.  \ao{From Proposition \ref{remark:eqrewrite} and Eq. (\ref{yminusmu}), we can work out 
the dynamics satisfied by the sequence $z(t)$ for $t \geq t_k$:  \begin{eqnarray} z(t+1) & = &  E[v(t+1) - \mu \1 ~|~ v(t_k)] \nonumber \\
& = & E[q(t+1) - \mu \1 ~|~ v(t_k)] \nonumber \\
&  = & E[(1-\Delta(t))v(t) + \Delta(t) y(t+1) - \mu \1 ~|~ v(t_k)] \nonumber \\
& = & E[ (1-\Delta(t))(v(t) - \mu \1) ~|~ v(t_k)] + E[\Delta(t) (y(t+1) - \mu \1) ~|~ v(t_k) ] \nonumber  \\
& = & E[ (1-\Delta(t))(v(t) - \mu \1) ~|~ v(t_k)] + E[\Delta(t) A(t) (v(t) - \mu \1) ~|~ v(t_k) ] \nonumber  \\
& = & \left[ (1- \Delta(t)) I + \Delta(t) A(t) \right] z(t) \label{zequation}.\end{eqnarray}}

Clearly, we need to argue that \begin{equation} \label{zinf} \inf ~~ \frac{\sum_{m=t_k}^{t_{k+1}-1} \sum_{(k,l) \in E(m)} (z_k(m) - z_l(m))^2 + \sum_{k \in S(m)} z_k^2(m) }{\sum_{i=1}^n z_i^2(t_k)} > 0  \end{equation} where the infimum is taken over all sequences of undirected communication graphs satisfying the conditions of uniform connectivity and measurement speed and \ao{all nonzero} $z(t_k)$ (which in turn determines all the $z(t)$ with $t \geq t_k$ through Eq. (\ref{zequation})). 

From Eq. (\ref{zequation}), we have that the expression within the infimum in Eq. (\ref{zinf}) is invariant under scaling of $z(t_k)$. So
we can conclude that, for any sequence of graphs $G(t)$ and measuring sets $S(t)$,  the infimum is always achieved by some vector $z(t_k)$ with $||z(t_k)||_2=1$. 


Now given a sequence of graphs $G(t)$ and a sequence of measuring sets $S(t)$, suppose $z(t_k)$ is a vector with unit norm that achieves this infimum.  Let $S_+ \subset \{1, \ldots, n\}$ be the set of indices $i$ with $z_i(t_k) > 0$, $S_-$ be the set of indices $i$ with $z_i(t_k) < 0$, and $S_0$ be the set of indices with $z_i(t_k)=0$. Since $||z(t_k) - \mu \1||_2 = 1$ we have that at least one of $S_+, S_-$ is nonempty. Without loss of generality, suppose that $S_-$ is nonempty. Due to the conditions of uniform connectivity and uniform measurement speed, there is a first time $t_k \leq t' < t_{k+1}$ when at least one of the following two events happens: (i) some node $i' \in S_-$ is connected to a node $j' \in S_0 \cup S_+$ (ii) some node $i' \in S_-$ has a measurement of $\mu$. 

In the former case, $z_{i'}(t')<0$ and $z_{j'}(t') \geq 0$ and consequently $(z_{i'}(t')-z_{j'}(t'))^2$ will be positive; in the latter case, $z_{i'}(t')<0$ and consequently
$z_{i'}^2(t')$ will be positive. 

We have thus shown that for every sequence of graphs $G(t)$ and sequence of measuring sets $S(t)$ satisfying our assumption and all $z(t_k)$ with $||z(t_k)||_2 = 1$ we have that the expression under the infimum in Eq. (\ref{zinf}) is strictly positive. Note that since the expression under the infimum is a continuous function of $z(t_k)$, this implies that, in fact, the infimum over all $z(t_k)$ with $||z(t_k)||_2=1$ is strictly positive. Finally, since there are only finitely many sequences of graphs $G(t)$ and measuring sets $S(t)$ of length $\max(T,2B)$, this proves Eq. (\ref{zinf}) and concludes the proof. 
\end{proof}

\smallskip Having established Proposition \ref{thm:conv}, we now turn to the proof of Theorem \ref{mainthm}. We will split the proof into several chunks. Recall
that Theorem \ref{mainthm} has two bounds: Eq. (\ref{eq:connected}) which holds when each graph $G(t)$ is connected and Eq. (\ref{eq:general}) which holds
in the more general case when the graph sequence $G(t)$ is $B$-connected. We begin by analyzing the first case. Our first lemma towards that end
 provides an upper bound on the eigenvalues of the matrices $A(t)$ corresponding to connected $G(t)$. 

\smallskip

\begin{lemma} Suppose each $G(t)$ is connected and at least one node makes a measurement at every time $t$. Then the largest eigenvalues of the matrices $A(t)$ satisfy  $$\lambda_{\rm max}(A(t)) \leq 1 - \frac{1}{24 \mathcal{H}} ~~ \mbox{ for all } t,$$ \label{conneig}
\noindent where, recall, $\mathcal{H}$ is an upper bound on the hitting times in the very lazy Metropolis walk on $G(t)$. 
\end{lemma}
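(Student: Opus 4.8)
The plan is to recognize $A(t)$ as the sub-stochastic transition matrix of the lazy Metropolis walk with ``soft killing'' at the measuring nodes, and to bound its top eigenvalue by controlling the expected survival time of that walk through $\mathcal{H}$.

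First I would record the decomposition furnished by Proposition \ref{remark:eqrewrite}. Let $S$ be the (nonempty) set of measuring nodes, let $P$ denote the transition matrix of the lazy Metropolis walk on $G(t)$, i.e. $P_{ij}=1/(4\max(d_i,d_j))$ for neighbors and diagonal chosen to make $P$ symmetric and doubly stochastic, and let $D_S=\sum_{i\in S}\e_i\e_i^T$ be the diagonal indicator of $S$. Reading off the entries in Proposition \ref{remark:eqrewrite} gives exactly $A(t)=P-\tfrac14 D_S$. Writing $P=\tfrac34 I+\tfrac14 P_M$ with $P_M$ the (non-lazy) Metropolis matrix of the sieve-constant definition shows the eigenvalues of $P$ lie in $[1/2,1]$, so $A(t)\succeq \tfrac14 I\succ 0$; together with connectivity of $G(t)$ and $S\ne\emptyset$ this forces $\lambda_{\max}(A(t))<1$, and hence $(I-A(t))^{-1}=\sum_{k\ge0}A(t)^k$ is a well-defined, entrywise nonnegative, symmetric matrix. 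Since $A(t)$ is symmetric, $\lambda_{\max}\big((I-A(t))^{-1}\big)=1/(1-\lambda_{\max}(A(t)))$, and the largest eigenvalue of a symmetric nonnegative matrix is at most its largest row sum; therefore, with $g:=(I-A(t))^{-1}\1$,
\[ \frac{1}{1-\lambda_{\max}(A(t))} \;\le\; \max_i g_i. \]
It thus suffices to prove $\max_i g_i\le 24\,\mathcal{H}$, and I in fact expect to obtain the sharper $\max_i g_i\le 9\mathcal{H}$, the factor $24$ being generous slack.

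The key step is a test-function bound exploiting $(I-A(t))^{-1}\ge0$: if some vector $f$ satisfies $(I-A(t))f\ge\1$ entrywise, then $g=(I-A(t))^{-1}\1\le f$. I would build $f$ from hitting times. Fix any $m\in S$ and let $h_i=E_i[\tau_m]$ be the expected hitting time of $m$ under the walk $P$, so that $h_i\le\mathcal{H}$, $h_m=0$, and $h$ solves the standard equations $[(I-P)h]_i=1$ for $i\ne m$ and $[(I-P)h]_m=-\sum_j P_{mj}h_j\ge-\mathcal{H}$ (the last inequality because $P_{m\cdot}$ is a probability vector and $h_j\le\mathcal{H}$). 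Set $f=h+c\1$ with $c=4(1+\mathcal{H})$. Using $I-A(t)=(I-P)+\tfrac14 D_S$ and $(I-P)\1=0$, one checks coordinate by coordinate that $(I-A(t))f\ge\1$: at $i\notin S$ it equals $[(I-P)h]_i=1$; at $i\in S\setminus\{m\}$ it equals $1+\tfrac14 f_i\ge1$; and at $i=m$ it equals $-\sum_j P_{mj}h_j+\tfrac14 c\ge -\mathcal{H}+(1+\mathcal{H})=1$. Hence $g\le f$, giving $\max_i g_i\le\mathcal{H}+4(1+\mathcal{H})\le 9\mathcal{H}$ and therefore $\lambda_{\max}(A(t))\le 1-1/(9\mathcal{H})\le 1-1/(24\mathcal{H})$.

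The conceptual heart of the argument is the first paragraph's reinterpretation of $A(t)$ as a killed walk, so that $1/(1-\lambda_{\max})$ becomes an expected survival time bounded by a max row sum. The only genuinely quantitative point is the diagonal inequality at the target node $m$: the killing coefficient $\tfrac14$ forces the additive shift $c=4(1+\mathcal{H})$, and this is where the hitting-time bound on $\sum_j P_{mj}h_j$ enters. Everything else is routine manipulation of $(I-P)h$, so I expect no serious obstacle beyond getting this single coordinate right.
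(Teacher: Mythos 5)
Your proof is correct, but it converts the key estimate into an eigenvalue bound by a genuinely different route than the paper. Both arguments share the same conceptual core: view $A(t)$ as the lazy Metropolis walk killed at rate $1/4$ on the measuring set, and control $\lambda_{\max}$ through the expected survival time of that walk. The paper makes this literal and probabilistic: it augments $A(t)$ to a stochastic matrix $A'$ with absorbing states attached to the measuring nodes, uses Markov's inequality to show the survival probability halves every $2\lceil \max_i T_i' \rceil$ steps, plugs the Perron eigenvector into this $1$-norm decay to get $\lambda_{\max} \leq 1 - 1/(4\lceil \max_i T_i' \rceil)$, and then bounds the absorption times $T_i'$ by roughly $4(\mathcal{H}+1)$ via a hit-then-absorb decomposition. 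You instead work algebraically with the resolvent: $1/(1-\lambda_{\max}) = \lambda_{\max}\bigl((I-A)^{-1}\bigr) \leq \max_i [(I-A)^{-1}\mathbf{1}]_i$, and bound that row sum by exhibiting the explicit supersolution $f = h + 4(1+\mathcal{H})\mathbf{1}$ built from the hitting-time vector to a fixed measuring node. Note that your $g = (I-A)^{-1}\mathbf{1}$ is exactly the paper's vector of expected absorption times $(T_i')$, so the two proofs bound the same quantity; what differs is the mechanism (your row-sum inequality for the symmetric nonnegative resolvent versus the paper's iterated Markov inequality plus $(1/2)^x \leq 1 - x/2$) and the bookkeeping (your coordinatewise verification of $(I-A)f \geq \mathbf{1}$ versus the paper's probabilistic decomposition). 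Your route is cleaner and yields the sharper constant $1 - 1/(9\mathcal{H})$, avoiding the ceiling and the factor-of-four loss. Two points you should make explicit to be airtight: the qualitative fact $\lambda_{\max}(A(t)) < 1$, needed before invoking the Neumann series, is the standard Perron--Frobenius statement for an irreducible substochastic matrix with at least one deficient row sum; and the final step $5\mathcal{H} + 4 \leq 9\mathcal{H}$ needs $\mathcal{H} \geq 1$, which holds because the lazy walk moves off any node with probability at most $1/4$ per step, so in fact $\mathcal{H} \geq 4$.
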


\smallskip

\begin{proof} Let us drop the argument $t$ and simply refer to $A(t)$ as $A$. Consider the iteration 
\begin{equation} p(k+1)^T = p(k)^T A. \label{aupdate} \end{equation} We argue that it has a probabilistic interpretation. Namely, let us transform the matrix $A$ into a stochastic matrix $A'$ in the following way: we introduce a new node $i'$ for every row $i$ of $A$ which has row sum less than $1$ and set 
\[ [A']_{i,i'} = 1 - \sum_{j} [A]_{ij}, ~~~ [A']_{i',i'} = 1. \] Then $A'$ is a stochastic matrix, and moreover, observe that by construction $[A']_{i,i'}=1/4$ for every new node $i'$ that is introduced. Let us adopt the notation $\mathcal{I}$ to be the set of new nodes $i'$ added in this way, and we will use $\mathcal{N} = \{1, \ldots, n\}$ to refer to the original nodes.

We then  have that if $p(0)$ is a stochastic vector (meaning it has nonnegative entries which sum to one), then $p(k)$ generated by Eq. (\ref{aupdate}) has the following interpretation: $p_j(k)$ is the probability that a random walk within initial distribution $p(0)$, taking steps according to $A'$, is at node $j$ at time $k$.  This is easily seen by induction:
clearly it holds at time $0$, and if it holds at time $k$, then it holds at time $k+1$ since $[A]_{ij} = [A']_{ij}$ if $i,j \in \mathcal{N}$ and $[A']_{i',j}=0$ for all $i' \in \mathcal{I}$ and $j \in \mathcal{N}$. 

Next, we note  that  $\mathcal{I}$ is an absorbing set for the Markov chain with transition matrix $A'$, and moreover $||p(k)||_1$ is the probability that the
random walk starting at $p(0)$ is not absorbed in $\mathcal{I}$ by time $k$. Defining $T_i'$ to be the expected time until a random walk with transition matrix $A'$ starting from $i$ is absorbed in the set $\mathcal{I}$, we have that by Markov's inequality, if $p(0)={\bf e}_i$ then for any $t \geq 2 \lceil T_i' \rceil$,
\[ \left| \left|p (t) \right| \right|_1  \leq  \frac{1}{2} \] Therefore, for any stochastic $p(0)$, 
\[ \left| \left|p \left( 2 \lceil \max_{i \in \mathcal{N}} T_i' \rceil \right) \right| \right|_1  \leq  \frac{1}{2}. \] Thus for any nonnegative integer $k$, 
\[ \left| \left|p \left( 2 k \lceil \max_{i \in \mathcal{N}} T_i' \rceil \right) \right| \right|_1 \leq \left( \frac{1}{2} \right)^k. \] This implies that for all initial vectors $p(0)$ (not necessarily stochastic ones), 
\[ \left| \left| p \left( 2 k \lceil \max_{i \in \mathcal{N}} T_i' \rceil \right) \right| \right|_1 \leq \left( \frac{1}{2} \right)^k ||p(0)||_1 .\] By the Perron-Frobenius theorem, $\lambda_{\rm max}(A)$ is real and its corresponding eigenvector is real. Plugging it in for $p(0)$, we get that 
\[ \lambda_{\rm max}^{2k \lceil \max_{i \in \mathcal{N}} T_i' \rceil} \leq \left( \frac{1}{2} \right)^k \] 
or \begin{equation} \label{lambdap}  \lambda_{\rm max} \leq \left( \frac{1}{2} \right)^{1/( 2 \lceil \max_{i \in \mathcal{N}} T_i' \rceil)} \leq 1 - \frac{1}{4 \lceil \max_{i \in \mathcal{N}} T_i' \rceil} \end{equation} where we used the inequality $(1/2)^x \leq 1 - x/2$ for all $x \in [0,1]$.

 It remains to rephrase this bound in terms of the hitting times in 
the very lazy Metropolis walk on $G$. We simply note that $[A']_{i,i'} = 1/4$ by construction, so 
\[ \max_i T_i' \leq 4 ( \mathcal{H} + 1 ). \] Thus 
\[ \lceil \max_i T_i' \rceil \leq 4 (\mathcal{H}+1) + 1 \leq 6 \mathcal{H}, \] where we used the fact that ${\cal H} \geq 4$. Combining this bound with Eq. (\ref{lambdap}) proves the lemma. 
\end{proof} 

\smallskip

\ao{With this lemma in place, we can now proceed to prove the first half of Theorem \ref{mainthm}, namely the bound of  Eq. (\ref{eq:connected}). }

\bigskip

\begin{proof}[Proof of Eq. (\ref{eq:connected})]  We use Proposition \ref{remark:eqrewrite} to write a bound for $E[Z(t+1) ~|~ v(t)]$ in terms of the largest eigenvalue $\lambda_{\rm max}(A(t))$. Indeed, as we remarked previously in Eq. (\ref{yminusmu}), 
\[ y(t+1) - \mu {\bf 1} = A(t) (v(t) - \mu {\bf 1} ) \] we therefore have that 
\[  ||y(t+1) - \mu {\bf 1} ||_2^2 \leq \lambda_{\rm max}^2(A(t)) Z(t) \leq \lambda_{\rm max}(A(t)) Z(t) \] where we used the fact that $\lambda_{\rm max}(A(t)) \leq 1$ since $A(t)$ is a substochastic matrix. Next, we have  
\[ ||q(t+1) - \mu {\bf 1}||_2^2 \leq \left[ 1 - \Delta(t) (1 - \lambda_{\rm max}(A(t)) \right] Z(t) \] and finally
\[ E [ Z(t+1) ~|~ v(t) ] \leq \left[ 1 - \Delta(t) \left( 1 - \lambda_{\rm max}(A(t)) \right) \right] Z(t) + \Delta(t)^2 \frac{\aor{ M\sigma^2 + n(\sigma')^2}}{16}. \]
Let $t_k$ be the times when a node has had a measurement.  Applying the above equation at times $t_k, t_{k}+1, \dots, t_{k+1}-1$ and using the eigenvalue bound of Lemma \ref{conneig} at time $t_{k}$ and the trivial bound $\lambda_{\rm max}(A(t)) \leq 1$ at times $t_{k}+1, \ldots, t_{k+1}-1$, we obtain, 
\begin{eqnarray*} E[Z(t_{k+1}) ~|~ v(t_k)] & \leq & \left( 1 - \frac{1}{24 \mathcal{H} t_k^{1-\epsilon}} \right) Z(t_k) + \frac{ M \sigma^2 + nT (\sigma')^2 }{16 t_k^{2 - 2 \epsilon}} \\ 
& \leq & \left( 1 - \frac{1}{24 \mathcal{H}  t_{k+1}^{1-\epsilon}} \right) Z(t_k) + \frac{M \sigma^2 + nT(\sigma')^2}{16 t_k^{2 - 2 \epsilon}}.
\end{eqnarray*} Iterating expectations and applying Corollary \ref{effdecay}, we obtain that for
\begin{equation} \label{kbound} k \geq \left[ \frac{288 T  \mathcal{H} }{\epsilon} \ln \left( \frac{96 T \mathcal{H} }{\epsilon} \right) \right]^{1/\epsilon}, 
\end{equation} we have
\[ E [ Z(t_k) ~|~ v(1) ] \leq \frac{25 (1/16)  (M\sigma^2 + nT(\sigma')^2)24 {\cal H} T \ln k}{k^{1 - \epsilon}}  + Z(1) e^{-(k^\epsilon - 2)/(24 \mathcal{H} T\epsilon)}. \]  Using the inequality $k \leq t_k \leq kT$, 
\begin{equation} \label{expect-tk} E [ Z(t_k) ~|~ v(1) ] \leq 38 {\cal H} T  \frac{   M \sigma^2 + n T(\sigma')^2}{(t_k/T)^{1 - \epsilon}} \ln (t_k) + Z(1) e^{-((t_k/T)^\epsilon - 2)/(24 \mathcal{H} T\epsilon)}.
\end{equation} 
Finally, for any $t$ satisfying  \begin{equation} \label{tlower} t \geq T +  T \left[ \frac{288 T  \mathcal{H} }{\epsilon} \ln \left( \frac{96 T \mathcal{H} }{\epsilon} \right) \right]^{1/\epsilon} \end{equation} there is some $t_k$ with $k$ satisfying Eq. (\ref{kbound}) within the last $T$ steps before $t$. We can therefore get an upper bound
on $E[Z(t) ~|~ Z(1)]$ applying Eq. (\ref{expect-tk}) to that last $t_k$, and noting that the expected increase from that $Z(t_k)$ to $Z(t)$ is bounded as 
\[ E[Z(t) ~|~ v(1)] - E[Z(t_k) ~|~ v(1)] \leq \frac{nT (\sigma')^2}{t_k^{2-2\epsilon}} \leq \frac{n T (\sigma')^2}{t_k^{1-\epsilon}}  \leq \frac{n T (\sigma')^2}{(t/T-1)^{1-\epsilon}} \]  This implies that for $t$ satisfying Eq. (\ref{tlower}), we have
\[ E[ Z(t) ~|~ v(1) ]  \leq  39 {\cal H} T \ln t\frac{M \sigma^2 + n T(\sigma')^2}{(t/T-1)^{1 - \epsilon}} \ln t + Z(1) e^{-((t/T-1)^\epsilon - 2)/(24 \mathcal{H} T\epsilon)}. \]  After some simple algebra, this is exactly the bound of the theorem. 
 \end{proof}
 
\bigskip

 We now turn to the proof of the second part of Theorem \ref{mainthm}, namely Eq. (\ref{eq:general}). Its proof requires a certain inequality between quadratic forms in the vector $v(t)$ which we 
 separate into the following lemma. 
 
 \bigskip
 
 \begin{lemma} Let $t_1 = 1$ and $t_k = (k-1) \max(T,2B)$ for $k > 1$, and assume that the entries of the vector $v(t_k)$ satisfy  \[ v_1(t_k) < v_2(t_k) < \cdots < v_n(t_k). \]  Further, let us assume that none of the $v_i(t_k)$ equal $\mu$, and let us define $p_-$ to be the largest index such that $v_{p-}(t_k)<\mu$ and $p_+$ to be the smallest index such that
 $v_p(t_k)> \mu$.  We then have \begin{eqnarray} \sum_{m=t_k}^{t_{k+1}-1} \sum_{(k,l) \in E(m)} E[v_k(m)-v_l(m) ~|~ v(t_k)]^2 +  \sum_{k \in S(m)} E[v_k(m) - \mu ~|~ v(t_k)]^2  & \geq & \nonumber\\  (v_{p_-}(t_k) - \mu)^2 + (v_{p+}(t_k) - \mu)^2 +  \sum_{i=1, \ldots, n, ~~ i \neq p_{-}}  (v_i(t_k) - v_{i+1}(t_k))^2   &&  \label{eq:quantdecbound}  \end{eqnarray} \label{lemma:quantdecbound} 
 \end{lemma}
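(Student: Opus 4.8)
The plan is to first reduce the probabilistic statement to a deterministic combinatorial inequality about the conditional means, and then to establish that inequality by a first-crossing-time argument carried out on an augmented graph that carries an extra ``measurement node.''

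First I would pass to conditional means. Set $z(m) = E[v(m) - \mu\1 \mid v(t_k)]$ for $m \geq t_k$, so that $z(t_k) = v(t_k) - \mu\1$ and, by Eq. (\ref{zequation}), $z(m+1) = [(1-\Delta(m)) I + \Delta(m) A(m)] z(m)$. Since $E[X^2] \geq (E X)^2$, each summand on the left of (\ref{eq:quantdecbound}) dominates the square of its conditional mean, so it suffices to bound $\sum_{m=t_k}^{t_{k+1}-1}\bigl(\sum_{(i,j)\in E(m)}(z_i(m)-z_j(m))^2 + \sum_{i\in S(m)} z_i(m)^2\bigr)$ from below by the right-hand side. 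I would then introduce a virtual node $0$ carrying the constant value $z_0 \equiv 0$ and, at each time $m$, adjoin an edge from $0$ to every node in $S(m)$; writing $\tilde E(m)$ for this augmented edge set, the quantity above becomes $\sum_m \sum_{(i,j)\in\tilde E(m)}(z_i(m)-z_j(m))^2$. Because $z_{p_-}(t_k)<0<z_{p_+}(t_k)$, node $0$ falls between $p_-$ and $p_+$ in the sorted augmented order, and the right-hand side of (\ref{eq:quantdecbound}) is \emph{exactly} the sum of squares of the consecutive gaps of the augmented values $\{z_1(t_k),\dots,z_n(t_k),0\}$, the straddling gap being split through $0$ into the two terms $z_{p_-}(t_k)^2$ and $z_{p_+}(t_k)^2$.

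Next I would reduce this to a cut bound. There are $n$ consecutive cuts of the augmented sorted order; each cut $C$ splits the nodes into a lower group $L_C$ and an upper group $U_C$ according to their values at $t_k$. By $B$-connectivity the union of the $G(m)$ over the window is connected, and by uniform measurement speed node $0$ is joined to some node within the window, so the augmented union graph is connected and every cut is crossed at some time; let $m_C$ be the first such time. The heart of the proof is a monotonicity-under-isolation claim: while no augmented edge has yet crossed $C$, the group $L_C$ evolves on its own, and I would argue that $\max_{j\in L_C} z_j$ is non-increasing and $\min_{j\in L_C} z_j$ non-decreasing. The point is that every update of a node of $L_C$ is a genuine convex combination of values in $L_C$: the matrix $(1-\Delta(m))I+\Delta(m)A(m)$ is nonnegative, and for a measuring node the ``missing'' weight $\Delta(m)/4$ in $A(m)$ is precisely the weight of the edge to node $0$, so before the first crossing either $0\in L_C$ (and the weight is internal) or no node of $L_C$ measures at all (a measurement would cross $C$). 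Hence at time $m_C$, for a crossing edge $(a,b)$ with $a\in L_C$ and $b\in U_C$, we get $z_a(m_C)\le \max_{L_C} z(t_k)$ and $z_b(m_C)\ge \min_{U_C} z(t_k)$, the two endpoints of the gap, so $z_b(m_C)-z_a(m_C)$ is at least the width of $C$.

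Finally I would sum by charging each cut to its first-crossing event. If a single event $(a,b,m)$ is the first crossing of several cuts $C_1<\cdots<C_r$, then $a$ lies below and $b$ above all of them, so $z_b(m)-z_a(m)\ge \min_{U_{C_r}}z(t_k)-\max_{L_{C_1}}z(t_k)\ge \sum_j (\text{width of }C_j)$; since all widths are nonnegative, $(z_a(m)-z_b(m))^2\ge\sum_j(\text{width of }C_j)^2$. Summing over the distinct first-crossing events recovers the full sum of squared gaps using only a subset of the terms of the augmented left-hand side, which proves the inequality. I expect the main obstacle to be precisely the \emph{value drift} — the conditional means $z(m)$ differ from the sorted configuration $z(t_k)$ — and it is the combination of the first-crossing time with the isolation-monotonicity claim that pins each gap to the configuration at $t_k$ rather than to the drifted values.
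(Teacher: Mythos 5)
Your proposal is correct, and its core mechanism is the same as the paper's: reduce to the conditional means $z(m)$ via $E[X^2]\ge E[X]^2$, then charge each squared gap of the initial configuration to the term produced at the first time the corresponding cut is crossed, using the fact that an as-yet-uncrossed group evolves by convex combinations of its own values (so its max cannot rise past the gap's lower endpoint, nor the complementary group's min fall past the upper endpoint), and finishing with superadditivity of squares, $(\sum_j w_j)^2 \ge \sum_j w_j^2$. Where you genuinely depart from the paper is the bookkeeping: the paper treats measurements as a separate kind of crossing, which forces it to introduce conventions about which of $S_l$, $S_l^c$ ``contains zero,'' to distinguish crossings of $S_{p_-}$ ``by a measurement from the left'' versus ``from the right,'' and to run a flipped-association argument for the two border terms $z_{p_-}^2(t_k)$, $z_{p_+}^2(t_k)$ (whose pairing with a single edge requires the extra inequality $(z_{p_-}-z_{p_+})^2 \ge z_{p_-}^2 + z_{p_+}^2$). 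Your virtual node $0$ with constant value $0$ and edges to the measuring nodes collapses all of this: measurement terms become ordinary edge terms, the two border terms become the two ordinary consecutive gaps straddling $0$ in the augmented sorted configuration, and a single uniform first-crossing/charging argument over the $n$ cuts of the augmented order handles every case at once --- a cleaner and less error-prone decomposition, at the negligible cost of checking that the augmented union graph over the window is connected (which, as you note, follows from $B$-connectivity plus the uniform measurement assumption; in fact $B$-connectivity alone already crosses the two cuts adjacent to $0$, since any edge between $\{1,\dots,p_-\}$ and $\{p_+,\dots,n\}$ crosses both). One small slip: in the isolation-monotonicity claim you wrote that $\min_{j\in L_C} z_j$ is non-decreasing, whereas what your argument actually uses (correctly, two sentences later) is that $\min_{j\in U_C} z_j$ is non-decreasing; state the claim symmetrically for both $L_C$ and $U_C$ and the proof is complete.
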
 
 
 \begin{proof} \ao{\ao{The proof parallels a portion of the proof of {\aor Proposition} \ref{thm:conv}. First, we change variables by defining $z(t)$ as \[ z(t) = E[ \aor{v}(t) - \mu \1 ~|~ v(t_k)] \] for $t \geq t_k$.  We claim that  
\begin{equation} \label{claimeq} \sum_{m=t_k}^{t_{k+1}-1} \sum_{(k,l) \in E(m)} (z_k(m)-z_l(m))^2 +  \sum_{k \in S(m)} z_k^2(m)  \geq   z_{p_-}^2(t_k) + z_{p_+}^2(t_k) +  \sum_{i=1, \ldots, n-1, ~~ i \neq p_-} (z_i(t_k) - z_{i+1}(t_k))^2  \end{equation} The claim immediately implies the current lemma. } }
 
\ao{Now we turn to the proof of the claim, \ao{which is similar to the proof of a lemma from \cite{noot09}.} We will associate to each term on the right-hand side of
Eq. (\ref{claimeq}) a term on the left-hand side of Eq. (\ref{claimeq}), and we will argue that each term on the left-hand side is at least is big as the sum of
all terms on the right-hand side associated with it.  }

\ao{To describe this association, we first introduce some new notation. We denote the set of nodes $\{1, \ldots, l\}$ by $S_l$; its complement, the set $\{l+1, \ldots, n\}$, is then $S_l^c$.  If $l \neq p_-$, we will abuse notation by saying that $S_l$ contains zero if $l \geq p_+$; else, we say that $S_l$ does not contain zero and $S_l^c$ contains zero. However, in the case of $l=p_-$, we will say that neither $S_{p_-}$ nor $S_{p_-}^c$ contains zero. We will say that $S_l$ ``is crossed by an edge''  at time $m$ if a node in $S_l$ is connected to a node in $S_l^c$ at time $m$. For $l \neq p_-$, we will say that $S_l$ is ``crossed by a measurement'' at time $m$ if a node in whichever of $S_l,S_l^c$ that does not contain zero has a measurement at time $m$. We will say that $S_{p_-}$ is ``crossed by a measurement from the left'' at time $m$ if a node in $S_{p_-}$  has a measurement at time $m$; we will say that it is ``crossed by a measurement from the right'' at time $m$ if a node in $S_{p_-}^c$ had a measurement at time $m$. Note that the assumption of uniform connectivity means that every $S_l$ is crossed by an edge at least one time $m \in \{ t_k, \ldots, t_{k+1}-1\}$. It may happen that $S_l$ is also crossed by measurements, but it isn't required by the uniform measurement assumption. Nevertheless, the uniform measurement assumption implies that $S_{p_-}$ is crossed by a measurement at some time $m \in \{ t_k, \ldots, t_{k+1}-1\}$. Finally, we will say that $S_l$ is crossed at time $m$ if it is either crossed by an edge or crossed by a measurement (plainly or from left or right).} 

\ao{We next describe how we associate terms on the right-hand side of Eq. (\ref{claimeq}) with terms on the left-hand side of Eq. (\ref{claimeq}). 
Suppose $l$ is any number in $1, \ldots, n-1$ except $p_{-}$;  consider the {\em first} time $S_l$ is crossed; let this be time $m$. If the crossing is  by an edge,  then let $(i,j)$ be any edge which goes between $S_l$ and $S_l^c$ at time $m$. We will associate $(z_l(t_k)-z_{l+1}(t_k))^2$ \aor{on the right-hand side of Eq. (\ref{claimeq})} with $(z_i(m)-z_j(m))^2$ \aor{on the left-hand side of Eq. (\ref{claimeq})}; as a shorthand for this, we will say that we associate index $l$ with the edge $(i,j)$ at time $m$. On the other hand, if $S_l$ is crossed by a measurement\footnote{If $S_l$ is crossed both by an edge and a measurement at time $m$, we will say it is crossed by an edge first. Throughout the remainder of this proof, we keep to the convention breaking ties in favor of edges by saying that $S_l$ is crossed first by an edge if the first crossing was simultaneously by both an edge and by a measurement.} at time $m$, let $i$ be a node in whichever of $S_l, S_l^c$ does not contain zero which has a measurement at time $m$. We associate $(z_l(t_k) - z_{l+1}(t_k))^2$ with $z_i^2(m)$; as a shorthand for this, we will say that we associate index $l$ with a measurement by $i$ at time $m$. }

\ao{Finally, we describe the associations for  the terms $v_{p_-}(t_k)^2$ and $v_{p+}(t_k)^2$, which are more intricate. Again, let us suppose that $S_{p_-}$ is crossed first at time $m$; if the crossing is by an edge, then we associate both these terms with any edge $(i,j)$ crossing $S_{p_-}$ at time $m$. If, however, $S_{p_-}$ is crossed first by a measurement from the left, then we associate $v_{p_-}^2(t_k)$ with $z_i^2(m)$, where $i$ is any node in $S_{p_-}$ having a measurement at time $m$. We then consider $u$, which is the first time $S_{p_-}$ is crossed by either an edge or a measurement from the right; if it is crossed by an edge, then  we associate $v_{p_+}(t_k)^2$ with $(z_i(u)-z_j(u))^2$ with any edge $(i,j)$ going between $S_{p_-}$ and $S_{p_-}^c$ at time $u$; else, we associate it with $z_i^2(u)$ where $i$ is any node in $S_{p_-}^c$ having a measurement at time $u$.  On the other hand, if $S_{p_-}$ is crossed first by a measurement from the right, then we flip the associations:
we associate $v_{p_+}^2(t_k)$ with $z_i^2(m)$, where $i$ is any node in $S_{p_-}^c$ having a measurement at time $m$. We then consider $u$, which is now the first time $S_{p_-}$ is crossed by either an edge or a measurement from the left.  If $S_{p_-}$ is crossed by an edge first, then we associate $v_{p-}(t_k)^2$ with $(z_i(u)-z_j(u))^2$ with any edge $(i,j)$ going between $S_{p_-}$ and $S_{p_-}^c$ at time $u$; else, we associate it with $z_i^2(u)$ where $i$ is any node in $S_{p_-}$ having a measurement at time $u$. }

\aor{It will be convenient for us to adopt the following shorthand: whenever we associate $v_{p_-}^2(t_k)$ with an edge or measurement as shorthand we will say that we associate the {\em border} $p_-$, 
and likewise for $p_+$. Thus we will refer to the association of {\em indices} $l_1, \ldots, l_k$ and {\em borders} $p_-, p_+$ with the understanding that the former refer
to the terms $(v_{l_i}(t_k) - v_{l_i + 1}(t_k))^2$ while the latter refer to the terms $v_{p_-}^2(t_k)$ and $v_{p_+}^2(t_k)$.}

\ao{We now go on to prove that every term on the left-hand side of Eq. (\ref{claimeq}) is at least as big as the sum of all terms on the right-hand side of Eq. (\ref{claimeq}) associated with it. }

\ao{Let us first consider the terms $(z_i(m)-z_j(m))^2$ on the left-hand side of Eq. (\ref{claimeq}). Suppose the edge $(i,j)$ with $i<j$ at time $m$ was associated with indices $l_1 < l_2 < \cdots < l_r$.  
\aor{It must be that $i \leq l_1$ while $j \geq l_{r}+1$.} The key observation is that if any $S_l$ has not been crossed before time $m$ then  \[ \max_{i=1, \ldots, l} z_i(m) \leq z_l(t_k) \leq z_{l+1}(t_k) \leq \min_{i=l+1, \ldots, n} z_i(m). \] Consequently, \[ z_i(m) \leq z_{l_1}(t_k) \leq z_{l_1+1}(t_k) \leq z_{l_2}(t_k) \leq z_{l_2+1}(t_k) \leq \cdots \leq z_{l_r}(t_k) \leq z_{l_r+1}(t_k) \leq z_j(m) \] which implies that
\[ (z_i(m) - z_j(m))^2 \geq (z_{l_1+1}(t_k) - z_{l_1}(t_k))^2 + (z_{l_2+1}(t_k)-z_{l_2}(t_k))^2 + \cdots + (z_{l_r+ 1}(t_k)-z_{l_r}(t_k))^2. \] 
This proves the statement in the case when the edge $(i,j)$ is associated with indices
$l_1 < l_2 < \cdots < l_r$.}

\ao{Suppose now that the edge $(i,j)$ is associated with indices $l_1 < l_2 < \cdots <l_r$ as well as both borders $p_-,p_+$. This happens when
every $S_{l_i}$ and $S_{p_-}$ is crossed for the first time by $(i,j)$, so that we can simply repeat the argument in the previous paragraph
to obtain \[ (z_i(m) - z_j(m))^2 \geq (z_{l_1+1}(t_k) - z_{l_1}(t_k))^2 + (z_{l_2+1}(t_k)-z_{l_2}(t_k))^2 + \cdots + (z_{l_r+ 1}(t_k)-z_{l_r}(t_k))^2 + (z_{p_-}(t_k) \aor{-} z_{p_+}(t_k))^2 \] which, since $(z_{p_-}(t_k) - z_{p_+}(t_k))^2 \geq z_{p_-}^2(t_k) + z_{p_+}^2(t_k)$ proves the statement in this case.}

\ao{Suppose now that the edge $(i,j)$ with $i<j$ at time $m$ is associated with indices $l_1 < l_2 < \cdots < l_r$ as well as the border $p_-$. \aor{From our
association rules, this can only happen in the following case:} every $S_{l_i}$ has not been crossed before time $m$, $S_{p_-}$ is being crossed by an edge at time $m$ and has been crossed from the right by a measurement but has not been crossed from the left before time $m$, nor has it been crossed by an edge before time $m$. Consequently, in addition to the inequalities $i \leq l_1, j \geq l_r+1$ we have the additional inequalities $i \leq p_-$ while $j \geq p_+$ (since $(i,j)$ crosses $S_{p_-}$). Because $S_{l_r}$ has not been crossed before and $S_{p_-}$ has not been crossed by an edge or measurement from the left before, we have \begin{eqnarray*} 
z_i(m) & \leq & \min(z_{p_-}(t_k), z_{l_1}(t_k)) \\ 
z_j(m) & \geq & \max(0, z_{l_r+1}(t_k)) 
\end{eqnarray*} so that
\begin{eqnarray*} (z_i(m)-z_j(m))^2 \geq (z_{l_1+1}(t_k) - z_{l_1}(t_k))^2 + (z_{l_2+1}(t_k)-z_{l_2}(t_k))^2 + \cdots + (z_{l_r+ 1}(t_k)-z_{l_r}(t_k))^2 + (z_{p_-}(t_k)-0)^2(t_k)
\end{eqnarray*} which proves the statement in this case.}

\ao{The proof when the edge $(i,j)$ is associated with index $l_1 < \cdots < l_r$ and $z_{p_+}^2(t_k)$ is similar, and we omit it. Similarly, the case when $(i,j)$ is associated with just one of the borders and no indices and both borders and no indices are proved with an identical argument. Consequently, we have now proved
the desired statement for all the terms of the form  $(z_i(m)-z_j(m))^2$.}

\ao{It remains to consider the terms $z_i^2(m)$. So let us suppose that the term $z_i^2(m)$ is associated with indices $l_1 < l_2 < \cdots < l_r$ as well as possibly one of the borders $p_-, p_+$. Note that due to the way we defined the associations it cannot be associated with them both. Moreover, again due to the
association rules, we will either have 
$l_1 < \cdots < l_r < p_-$ or $p_+ \leq l_1 < \cdots < l_r$; let us assume it is the former as the proof in the latter case is similar. Since $S_{l_1}$ has 
not been crossed before, we have that 
\[ z_i(m) \leq z_{l_1}(t_k) \leq z_{l_1+1}(t_k) \leq z_{l_2}(t_k) \leq z_{l_2+1}(t_k) \leq \cdots \leq z_{l_r}(t_k) \leq z_{l_r+1}(t_k) \leq z_{p_-}(t_k) < 0  \]
 and therefore
\[ z_i^2(m) \geq (z_{l_1+1}(t_k) - z_{l_1}(t_k))^2 + (z_{l_2+1}(t_k)-z_{l_2}(t_k))^2 + \cdots + (z_{l_r+ 1}(t_k)-z_{l_r}(t_k))^2 + (z_{p_-}(t_k)-0)^2 \] which proves the result in this case. Finally, the case when $z_i(m)$ is associated with just one of the borders is proved with an identical argument. This concludes the proof.}
 \end{proof} 
 
 \smallskip
 
\ao{We are now finaly able to prove the very last piece of Theorem \ref{mainthm}, namely 
Eq. (\ref{eq:general}).}

\bigskip

\begin{proof}[Proof of Eq. (\ref{eq:general})] As in the statement of Lemma \ref{lemma:quantdecbound}, we choose $t_1=1$, and 
$t_k = (k-1) \max(T,2B)$ for $k > 1$. \ao{Observe that by a continuity argument Lemma (\ref{eq:quantdecbound}) holds even with the strict inequalities between $v_i(t_k)$ replaced with nonstrict inequalities and without the assumption that none of the $v_i(t_k)$ are \aor{equal to $\mu$}. Moreover, using the inequality \[ (v_{p_-}(t_k) - \mu)^2 + (v_{p+}(t_k) - \mu)^2 \geq \frac{(v_{p_-}(t_k) - \mu)^2 + (v_{p+}(t_k) - \mu)^2 + (v_{p_-}(t_k) - v_{p_+}(t_k))^2}{4}, \]  we have that Lemma (\ref{eq:quantdecbound}) implies that}  \[ \sum_{m=t_k}^{t_{k+1}-1} \sum_{(k,l) \in E(m)} E[ (v_k(m)-v_l(m))^2 ~|~ v(t_k)] +   \sum_{k \in S(m)} E[ (v_k(m) - \mu)^2 ~|~ v(t_k)]  \geq  \frac{1}{4} \kappa(L_n) Z(t_k).\] Because $\Delta(t)$ is decreasing and the degree of any vertex at any time is at most $d_{\rm max}$, this in turn implies \begin{footnotesize}
  \[ \sum_{m=t_k}^{t_{k+1}-1} \frac{\Delta(m)}{8} \sum_{(k,l) \in E(m)} \frac{E[ (v_k(m)-v_l(m))^2 ~|~ v(t_k)]}{\max (d_k(m), d_l(m))}  + \frac{\Delta(m)}{4  }  \sum_{k \in S(m)} E[ (v_k(m) - \mu)^2 ~|~ v(t_k)]  \geq \frac{\Delta(t_{k+1})}{32 d_{\rm max}} \kappa(L_n) Z(t_k).\] \end{footnotesize}
Now appealing to Eq. (\ref{decreaseineq}), we have
\[ E[Z(t_{k+1}) ~|~ v(t_k) ] \leq \left(1 - \frac{\Delta(t_{k+1}) \kappa(L_n)}{32 d_{\rm max} } \right) Z(t_k) +  \Delta(t_k)^2 \frac{\aor{ M \sigma^2 + n  (\sigma')^2}}{16} \max(T,2B). \] 
Now applying Corollary \ref{effdecay} as well as the bound $\kappa(L_n) \geq 1/n^2$ from Lemma \ref{lemma:snonnegativity}, we get that for 
\begin{equation} \label{finalkbound} k \geq  \left[ \frac{384 n^2 d_{\rm max} \left( 1 + \max(T,2B) \right) }{\epsilon} \ln \left( \frac{128 n^2 d_{\rm max} \left( 1 + \max(T,2B) \right) }{\epsilon} \right) \right]^{1/\epsilon} \end{equation} we will have 
\[ E[ Z(t_k) ~|~ v(1) ] \leq 25(1/16)  \frac{ M \sigma^2 + n  (\sigma')^2}{k^{1 - \epsilon}} (1+\max(T,2B))^2  32 n^2 d_{\rm max}\ln k + Z(1) e^{-(k^\epsilon - 2)/(32n^2 d_{\rm max}  \left( 1 + \max(T,2B) \right) \epsilon)}. \] Now using $t_k = (k-1) \max(T,2B) $ for $k >1$, we have \begin{small}
\[ E[ Z(t_k) ~|~ v(1) ] \leq 50 n^2 d_{\rm max} (1+\max(T,2B))^2 \frac{ M \sigma^2 + n   (\sigma')^2}{\left( 1 + t_k/\max(T,2B) \right)^{1 - \epsilon}} \ln t_k + Z(1) e^{-( \left(1 + t_k/\max(T,2B)\right)^\epsilon - 2)/(32n^2 d_{\rm max}  \left( 1 + \max(T,2B) \right) \epsilon)}. \] \end{small}

For a general time $t$, we have that as long as 
\[ t \geq \max(T,2B) +  \max(T,2B) \left[ \frac{384 n^2 d_{\rm max} \left( 1 + \max(T,2B) \right) }{\epsilon} \ln \left( \frac{128 n^2 d_{\rm max} \left( 1 + \max(T,2B) \right) }{\epsilon} \right) \right]^{1/\epsilon} \] we have that there exists a $t_k \geq t - \max(T,2B)$ with $k$ satisfying the lower bound of Eq. (\ref{finalkbound}). Moreover, the increase from this $E[Z(t_k) ~|~ v(0)]$ to $E[Z(t) ~|~ v(0)]$ is upper bounded by $\max(T,2B) (1/16) \left[  M \sigma^2 + n  (\sigma')^2\right]/t_k^{2 - 2 \epsilon}$. Thus:
\[ E[ Z(t) ~|~ v(1) ] \leq 51 n^2 d_{\rm max} (1+\max(T,2B))^2  \frac{ M \sigma^2 + n  (\sigma')^2}{\left( t/\max(T,2B)  \right)^{1 - \epsilon}} \ln t + Z(1) e^{-( \left(t/\max(T,2B)\right)^\epsilon - 2)/(32n^2 d_{\rm max}  \left( 1 + \max(T,2B) \right) \epsilon)}. \] After some simple algebra, this is exactly what we sought to prove. 

 \end{proof}

\section{Simulations\label{sec:simul}} We report here on several simulations of our learning protocol. These simulations confirm the broad outlines of the bounds we have derived; the convergence to $\mu$ takes place at a rate broadly consistent with inverse polynomial decay in $t$ and the scaling with $n$ appears to be polynomial as well. 

Figure \ref{threeplots} shows  plots of the distance from $\mu$  for the complete graph, the line graph \ao{(with one of the endpoint nodes doing the sampling)}, and the star graph \ao{(with the center node doing the sampling)}, each on $40$ nodes. These are the three graphs in the left column of the figure. We caution that there is no reason to believe these charts capture the correct asymptotic behavior as $t \rightarrow \infty$. Intriguingly, the star graph and the complete graph appear to have very similar performance. By contrast, the performance of the line graph  is an order of magnitude inferior to the performance of either of these; it takes the line graph on $40$ nodes on the order of $400,000$ iterations to reach roughly the same level of accuracy that the complete graph and star graph reach after about $10,000$ iterations. 

Moreover, Figure \ref{threeplots} also shows the scaling with the number of nodes $n$ on the graphs in the right column of the figure. The graphs show the time until $||v(t) - \mu \1||_{\infty}$ decreases below a certain threshold as a function of number of nodes. We see scaling that could plausibly be superlinear for the line graph and essentially linear for the complete graph and essentially linear for the star graph over the range shown.

\begin{figure}[h]  
\begin{center}$
\begin{array}{cc}
\includegraphics[width=2.5in]{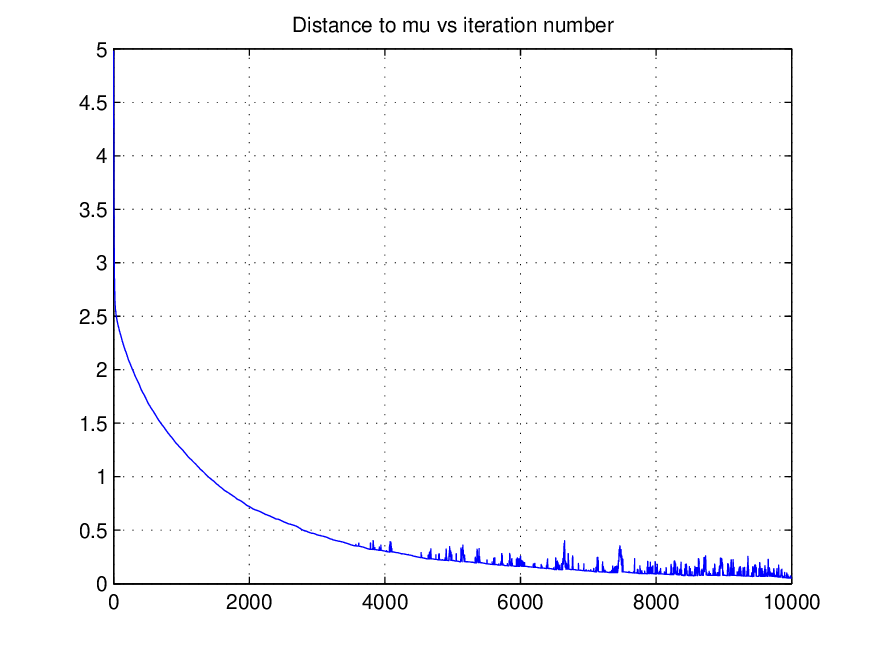} &
\includegraphics[width=2.5in]{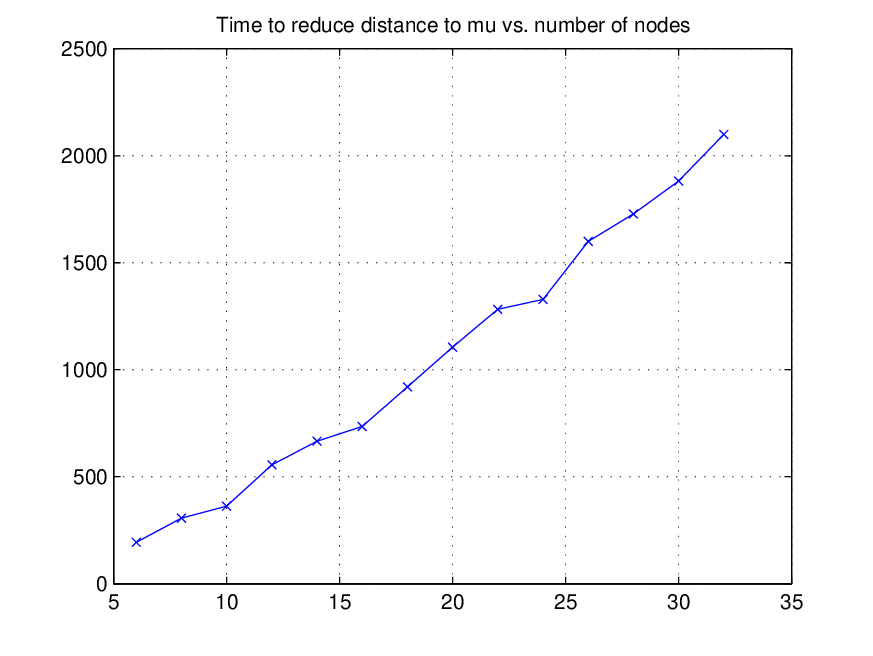}  \\ 
\includegraphics[width=2.5in]{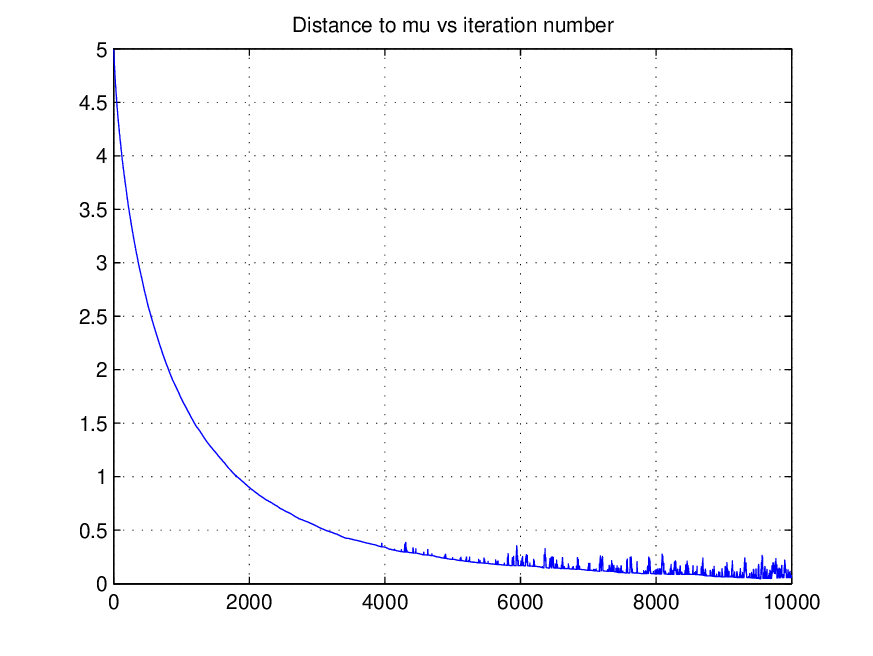} & 
\includegraphics[width=2.5in]{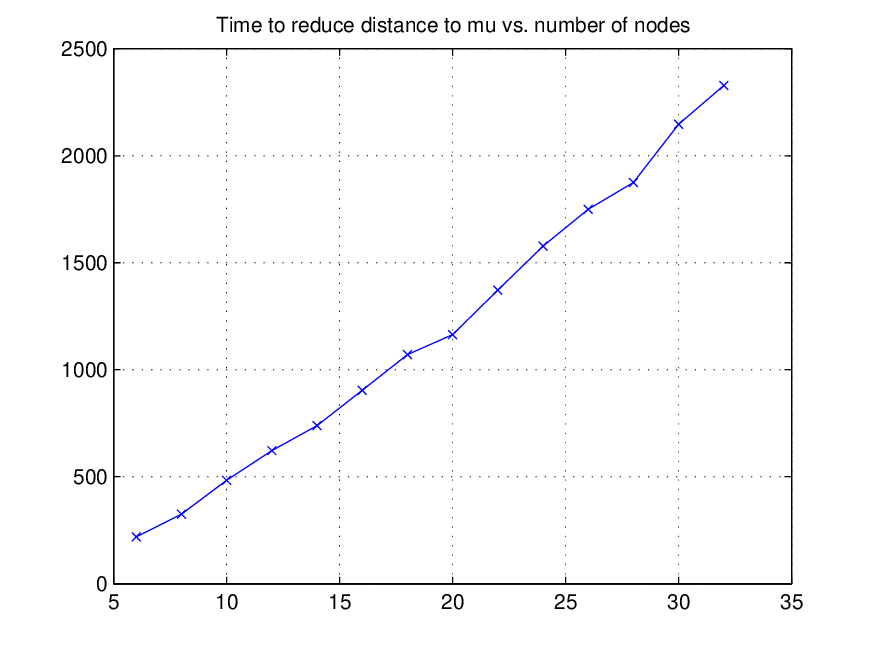}  \\
\includegraphics[width=2.5in]{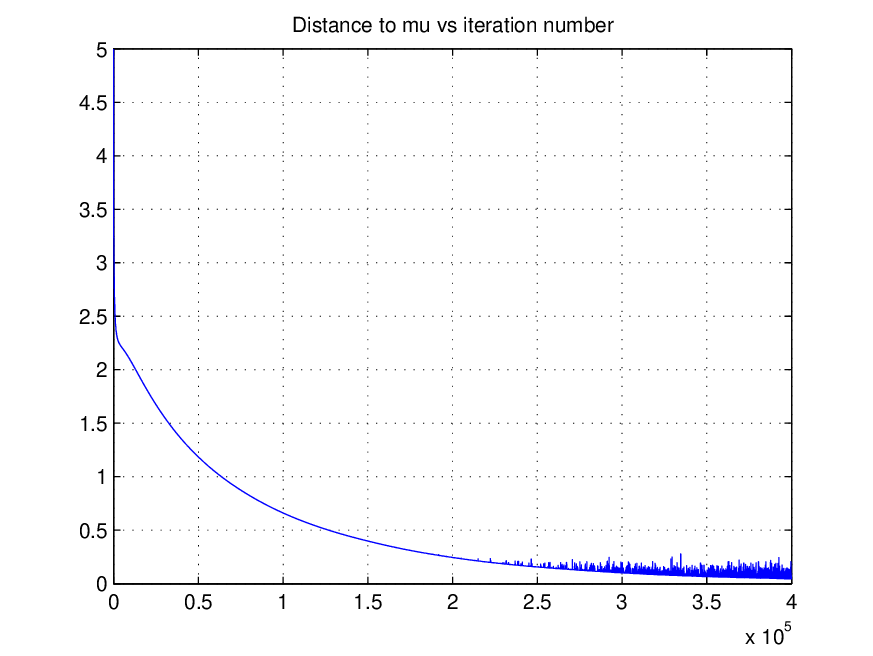}  &
\includegraphics[width=2.5in]{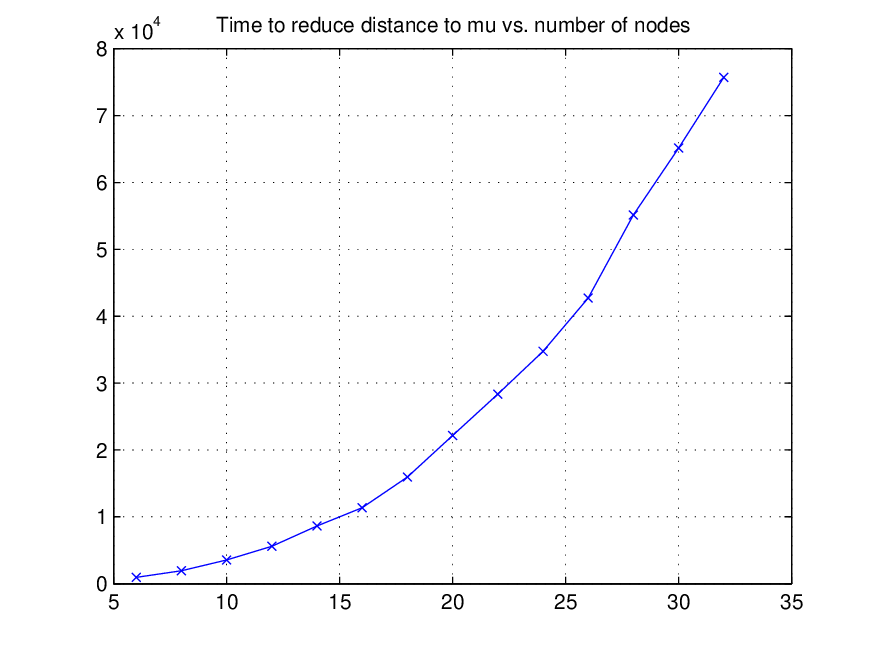} 
\end{array}$
\end{center} 
\caption{The graphs in the first column show $||v(t) - \mu \1||_{\infty}$ as a function of the number of iterations in a network of $40$ nodes starting from a random vector with entries uniformly random in $[0,5]$. The graphs in the second column show how long it takes $||v(t) - \mu \1||_{\infty}$ to shrink below $1/2$ as function of the number of nodes; inital values are also uniformly random in $[0,5]$.  The two graphs in the first row correspond to the complete graph; the  two graphs in the middle row correspond to the star graph; the two graphs in the last row 
correspond to the line graph. In each case, exactly one node is doing
the measurements; in the star graph it is the center vertex and in the line graph it is one of the endpoint vertices. Stepsize is chosen to be $1/t^{1/4}$ for all three simulations.} \label{threeplots}
 \end{figure}

Finally, we include a simulation for the lollipop graph, defined to be a complete graph on $n/2$ vertices joined to a line graph on $n/2$ vertices. The lollipop graph often appears as an extremal graph for various random walk properties (see, for example, \cite{bw90}). \ao{The node at the end of the stem, i.e., the node which is furthest from the complete subgraph, is doing the sampling.} The scaling with the number of nodes is  considerably worse than for the other graphs we have simulated here.

\begin{figure}[h] 
\begin{center}$
\begin{array}{cc}
\includegraphics[width=2.5in]{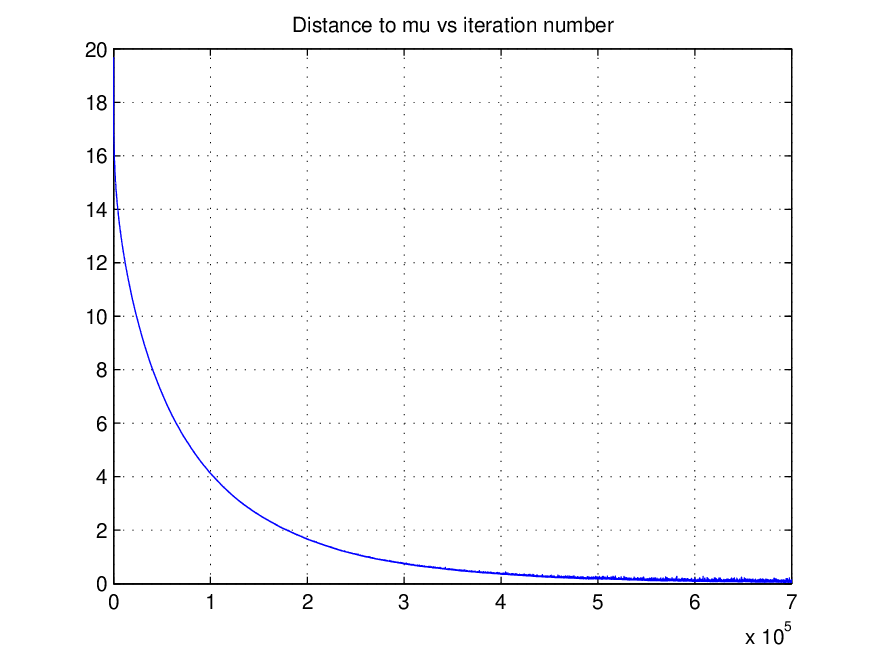} &
\includegraphics[width=2.5in]{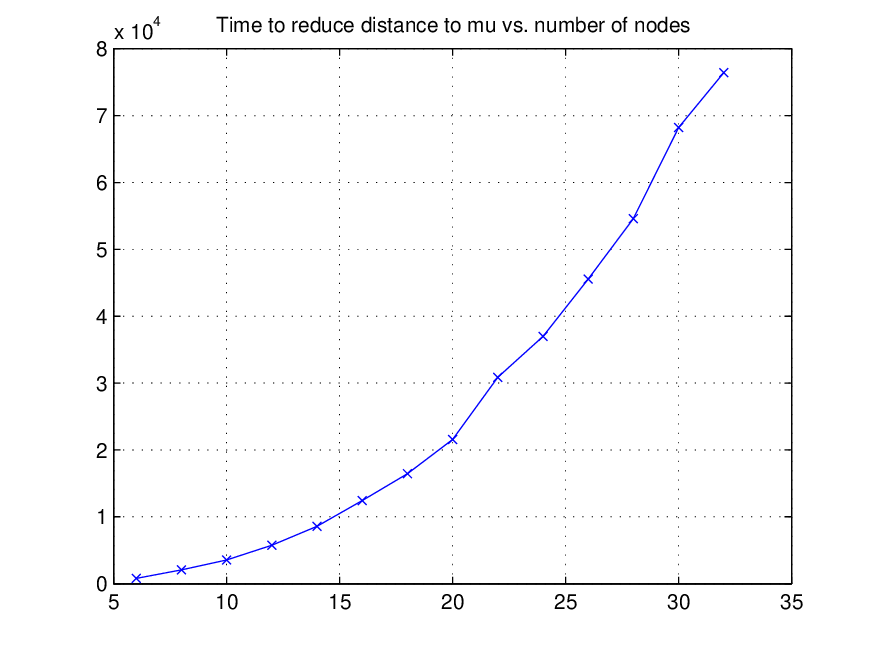} 
\end{array}$
\end{center}
\caption{The plot on the left shows $||v(t) - \mu \1||_{\infty}$ as a function of the number of iterations for the lollipop graph on $40$ nodes; the plot on the right shows the time until $||v(t) - \mu \1||_{\infty}$ shrinks below $0.5$ as function of the number of nodes $n$. In each case, exactly one node is performing the measurements, and it is the node farthest from the complete subgraph. The starting point is a random vector with entires in $[0,5]$ for both simulation and stepsize is $1/t^{1/4}$. } \label{lollipop}
\end{figure}

\ao{Finally, we emphasize that the learning speed also depends on the precise location of the node doing the sampling within the graph. While our results in this paper bound the worst case performance over all choices of sampling node, it may very well be that by appropriately choosing the sensing nodes, better performance relative to our bounds and relative to these simulations can be achieved.}

\section{Conclusion\label{sec:concl}} We have proposed a model for cooperative learning by multi-agent systems facing time-varying connectivity and intermittent measurements. We have proved a protocol capable of learning an unknown vector from independent measurements in this setting and provided quantitative bounds on its learning speed. Crucially, these bounds have a dependence on the number of agents $n$ which grows only polynomially fast, leading to reasonable scaling for our protocol.  We note that the sieve constant of a graph, a new measure of connectivity we introduced, played a central role in our analysis. On sequences of
connected graphs, the largest hitting time turned out to be the most relevant combinatorial primitive. 

Our research points to a number of intriguing open questions. Our results are for undirected graphs and it is unclear whether there is a learning protocol which will achieve similar bounds (i.e., a learning speed which depends only polynomially on $n$) on directed graphs. It appears that our bounds on the learning speed are loose by several orders of magnitude when compared to simulations, so that the learning speeds we have presented in this paper could potentially be further improved. Moreover, it is further possible that a different protocol provides a faster learning speed compared to the one we have provided here. 

Finally, and most importantly, it is of interest to develop a general theory of decentralized learning capable of handling situations in which complex concepts need to be learned by a distributed network subject to time-varying connectivity and intermittent arrival of new information. Consider, for example, a group of UAVs all of which need to learn a new strategy to deal with an unforeseen situation, for example, how to perform formation maintenance in the face \ao{of a particular pattern of turbulence}. Given that selected nodes can try different strategies, and given that nodes can observe the actions and the performance of neighboring nodes, is it possible for the entire network of nodes to collectively learn the best possible strategy? A theory of general-purpose decentralized learning, designed to parallel the theory of PAC (Provably Approximately Correct) learning in the centralized case, is warranted.  

\section{Acknowledgements} An earlier version of this paper published in the CDC proceedings had an incorrect decay rate with $t$ in the main result. The authors are grateful to Sean Meyn for pointing out this error.

\end{document}